\title{Dual Garside structure and reducibility of braids}
\theoremstyle{plain}
\newtheorem{defi}{Definition}
\newtheorem{prop}[defi]{Proposition}
\newtheorem{lem}[defi]{Lemma}
\newtheorem{thm}[defi]{Theorem}
\newtheorem{claim}[defi]{Claim}
\newtheorem{ex}[defi]{Example}
\newtheorem{remark}[defi]{Remark}
\newcommand{\cvr}{\curvearrowright}
\newcommand{\cvl}{\curvearrowleft}
\newcommand{\clr}{\circlearrowright}
\newcommand{\cll}{\circlearrowleft}
\begin{document}

\author{Matthieu Calvez}\thanks{This work was done while the author was staying at the University of Seville. The visit was partially supported by a grant for international mobility by the Universit\'{e} Europ\'{e}enne de Bretagne, by the Spanish research group FQM-218 (Junta de Andaluc\'{i}a) and by the Spanish Project MTM2010-19355.}

\subjclass[2010]{20F36}
\begin{abstract} Benardete, Gutierrez and Nitecki showed an important result which relates the geometrical properties of a braid, as a homeomorphism of the punctured disk, to its algebraic Garside-theoretical properties. Namely, they showed that if a braid sends a {\it standard} curve to another {\it standard} curve, then the image of this curve 
under the action of each factor 
of the left normal form of the braid (with the classical Garside structure) is also standard.
We provide a new simple, geometric proof of the result by Benardete-Gutierrez-Nitecki, which can be easily adapted to the case of the dual Garside structure of braid groups, with the appropriate definition of standard curves in the dual setting. This yields a new algorithm for determining the Nielsen-Thurston type of braids.
\end{abstract}

\maketitle
\section{Introduction}
Braid groups are both mapping class groups and Garside groups.
The links between these two features of braid groups seem
to be very deep and their investigation is currently the
subject of much research.
In particular the Garside-theoretic
approach to the problem of deciding algorithmically
the Nielsen-Thurston type of a given braid turns
out to be very fruitful
\cite{bgn},\cite{leelee},\cite{bertjuan},\cite{juan2010},\cite{bertmatth}.

The $n$-strand braid group is naturally identified
with the mapping class group of the $n$-times punctured
disk $D_n$. Braids induce a (right) action on the set
of isotopy classes of simple closed curves in $D_n$:
considering the isotopy class $[\mathcal C]$
of a simple closed curve $\mathcal C$ and an $n$-braid $x$,
the isotopy class of simple closed curves resulting
from the action of $x$ on $[\mathcal C]$ will be denoted
by $[\mathcal C]^x$. The simple closed curves
we shall consider in the present paper will be
\emph{nondegenerate}, that is surrounding
more than one and less than $n$ punctures.

Let us assume for the moment that
$D_n$ is parametrized as the disk with
diameter $[0,n+1]$ in $\mathbb C$ and
points $1,2,\ldots,n$ removed.
In this setting, a curve $\mathcal C$ is
said to be {\it standard}, or {\it round},
if it is isotopic to a geometric circle in $D_n$.
That is, if the punctures enclosed by $\mathcal C$
are consecutive. An isotopy class~$[\mathcal C]$
is said to be standard, or round,
if some (hence every) representative is round.
Round curves are particularly useful for
decomposing a reducible braid into its
corresponding components \cite{juan2010}.
As every reducible braid has a conjugate
which preserves a family of round curves~\cite{bgn},
searching for such a conjugate becomes
a possible strategy both for determining whether
a braid is reducible and for finding its geometric components.
Benardete, Gutierrez and Nitecki~\cite{bgn}
explain how to determine whether such a conjugate exists,
and also how to find it, thanks to the
classical Garside structure of braid groups.

Braid groups are the main examples of
Garside groups~\cite{dehparis}.
This means that they admit a lattice structure,
and a special element denoted $\Delta$,
satisfying some properties first discovered by
Garside in \cite{garside}. We will refer to this as
the {\it classical Garside structure} of the braid group.
Using this structure, one can define the
{\it left normal form} of a braid $x$,
which is a unique decomposition of the form
$x=\Delta^p x_1\cdots x_r$
(see~\cite{echlpt},\cite{elrifaimorton})
in which the factors belong to
the set of the so-called {\it simple elements}.

The result by Benardete, Gutierrez and Nitecki,
which relates round curves and left normal forms,
is the following:
\begin{thm}[\cite{bgn},\cite{leelee}] \label{theoremartin}Let $\mathcal C$ be a standard curve in $D_n$. Let $x=\Delta^p x_1\cdots x_r$ be a braid in (classical) left normal form. If $[\mathcal C]^x$ is standard, then $[\mathcal C]^{\Delta^p x_1\cdots x_m}$ is standard for $m=1,\ldots,r$.
\end{thm}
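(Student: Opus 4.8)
The plan is to encode roundness as a single intersection number and then prove that this number is monotone along the left normal form. Fix once and for all the test arc $\mathbb{A}\subset D_n$ equal to the real diameter of the disk, regarded as an arc of the closed disk passing through all $n$ punctures, and write $i(\cdot,\cdot)$ for geometric intersection number in $D_n$. The basic observation is that a nondegenerate curve $\mathcal{C}$ is round if and only if $i(\mathcal{C},\mathbb{A})=2$: since every puncture lies on $\mathbb{A}$ while its endpoints lie on $\partial D_n$, a nondegenerate curve must meet $\mathbb{A}$, hence by parity at least twice; a round curve plainly meets it exactly twice; and conversely, if $\mathcal{C}$ meets $\mathbb{A}$ in two points, minimal position forces one subarc of $\mathcal{C}$ above and one below the real axis, so $\mathcal{C}$ bounds a disk enclosing precisely the block of consecutive punctures situated between the two intersection points, i.e.\ $\mathcal{C}$ is round.

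Everything then reduces to the following monotonicity. \textbf{Key Lemma:} \emph{if $\mathcal{C}$ is round and $x_1\cdots x_r$ is a positive braid in left normal form, then $i\big([\mathcal{C}]^{x_1\cdots x_{r-1}},\mathbb{A}\big)\le i\big([\mathcal{C}]^{x_1\cdots x_r},\mathbb{A}\big)$.} Granting it, observe that $\Delta$, being the half-twist, sends round curves to round curves, so $[\mathcal{C}]^{\Delta^{p}}$ is round; applying the Key Lemma to this curve and to each prefix $x_1\cdots x_m$ of the normal form — each a positive braid in left normal form — produces the chain
\[
2=i\big([\mathcal{C}]^{\Delta^{p}},\mathbb{A}\big)\le i\big([\mathcal{C}]^{\Delta^{p}x_1},\mathbb{A}\big)\le\cdots\le i\big([\mathcal{C}]^{\Delta^{p}x_1\cdots x_r},\mathbb{A}\big)=i\big([\mathcal{C}]^{x},\mathbb{A}\big).
\]
If $[\mathcal{C}]^{x}$ is round the right-hand side equals $2$ as well, so the chain is constant; as all the curves occurring in it are nondegenerate, each $[\mathcal{C}]^{\Delta^{p}x_1\cdots x_m}$ is round, which is exactly the statement of the theorem.

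It remains to prove the Key Lemma, and this is the geometric heart of the matter — the step I expect to be the main obstacle. Put $\mathcal{E}=[\mathcal{C}]^{x_1\cdots x_{r-1}}$; pulling $\mathbb{A}$ back through the last factor, the inequality becomes $i(\mathcal{E},\mathbb{A})\le i\big(\mathcal{E},\mathbb{A}^{x_r^{-1}}\big)$, i.e.\ the simple braid $x_r^{-1}$ must not cancel any intersection point of $\mathbb{A}$ with $\mathcal{E}$. Both hypotheses enter. Roundness of $\mathcal{C}$ makes $\mathcal{E}$ meet the fully advanced arc $\mathbb{A}^{x_1\cdots x_{r-1}}$ in only two points, so $\mathcal{E}$ carries no braiding beyond what the positive word has produced, and any bigon between $\mathcal{E}$ and $\mathbb{A}^{x_r^{-1}}$ would have to be created by $x_r^{-1}$ alone. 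The left normal form condition on the pair $x_{r-1}x_r$ — the inclusion of the starting set of $x_r$ in the finishing set of $x_{r-1}$ — says that the crossings with which $x_r$ begins occur only among strands already gathered together at the end of $x_{r-1}$; reading a simple element off its strand diagram, in which each pair of strands crosses at most once, this forces the isotopy realizing $x_r^{-1}$ to push $\mathbb{A}$ away from the fingers of $\mathcal{E}$ instead of across them, so no such bigon can appear. Turning this picture into a rigorous innermost-bigon argument — keeping track, via the starting and finishing sets of the successive simple factors, of exactly which intersection points of $\mathcal{E}$ with $\mathbb{A}$ survive — is the delicate point. It is also precisely the argument that, once in place, adapts to the dual Garside structure, with the circle through the punctures in convex position in place of $\mathbb{A}$ and the dual simple elements (non-crossing partition braids) in place of permutation braids.
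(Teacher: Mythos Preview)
Your characterization of roundness as $i(\mathcal C,\mathbb A)=2$ is correct, and the reduction of the theorem to your Key Lemma is valid. The gap is that the Key Lemma itself is not proved, and the sketch you offer cannot be completed as it stands. You invoke two ingredients: that $\mathcal E$ meets $\mathbb A^{x_1\cdots x_{r-1}}$ twice, and the starting/finishing set inclusion for the pair $x_{r-1}x_r$. But the first is merely a restatement of the roundness of $\mathcal C$ and puts no constraint whatsoever on how $\mathcal E$ sits relative to $\mathbb A$; the second is a purely local condition on the last two factors. Nothing in your outline ties any particular intersection point of $\mathcal E$ with $\mathbb A$ to the finishing set of $x_{r-1}$, so there is no mechanism preventing $x_r$ from cancelling intersections. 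Note too that your Key Lemma asserts full monotonicity of $i(\cdot,\mathbb A)$ along the normal form, which is strictly stronger than the theorem (the theorem only says that once the number exceeds $2$ it never drops back to $2$); it is not clear this stronger claim is even true, and an innermost-bigon argument alone will not decide it.

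The paper's argument is different and more local. Instead of controlling the total intersection number, it isolates a single \emph{bending point}: an index $j$ such that the reduced word of the curve contains a subword $i\smallfrown j\smallsmile l$ with $i<j<l$. Any bending point certifies non-roundness. Crucially, such a $j$ is declared \emph{compatible} with a simple element $s$ when the strands at positions $j$ and $j+1$ at the bottom of $s$ have not crossed in $s$ --- equivalently, $j$ is not in the finishing set of $s$. Compatibility is exactly the datum that hooks into the left-normal-form condition: if $j$ is compatible with $x_m$ then, by the starting-set inclusion, $\sigma_j$ cannot begin $x_{m+1}$, and Lemma~\ref{mainartin} (proved by factoring $x_m=\prod_p\Sigma_{p,k_p}$ and following the bending point through each $\Sigma_{p,k_p}$) manufactures a bending point $j'$ for $\mathcal C^{x_1\cdots x_m}$ compatible with $x_{m+1}$. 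Thus the induction carries forward not just a certificate of non-roundness but also the compatibility that makes the next step go through. Your sketch is missing precisely this carried-over piece of information: to run an innermost-bigon argument you would have to specify, at each stage, \emph{which} intersections of $\mathcal E$ with $\mathbb A$ are protected by the normal-form hypothesis, and once you do that you are essentially rebuilding the bending-point machinery.
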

Thanks to this result, and applying
special conjugations called {\it cyclings} and
{\it decyclings}, it is possible to determine
whether a braid has a conjugate which preserves
a family of round curves, hence it is possible
to know whether a braid is reducible,
with the aid of the Garside structure of the braid group~\cite{bgn}.

The method mentioned in the previous paragraph
requires the computation of 
a big subset of the
conjugacy class of a braid. This
has been improved in~\cite{bertjuan},
avoiding the computation of such a big subset,
at the cost of enlarging the set of standard curves
to include both {\it round} and 
{\it almost-round} curves.
This raises the question of whether the notion of round curves,
and the use of the classical Garside structure,
are the best choices for this kind of techniques.

There is another well-known Garside structure
of the braid group, discovered by Birman, Ko and Lee \cite{bkl},
which is known as the {\it dual Garside structure}.
With respect to this structure, the
left normal form of a braid is a unique decomposition
$x=\delta^p x_1\cdots x_r$, where $\delta$ is
the special (Garside) element, and the factors are simple,
with respect to the dual structure.

In this paper we prove the analogue of
Theorem \ref{theoremartin} in the dual setting.
We remark that the proofs of Theorem
\ref{theoremartin} given in \cite{bgn} and
\cite{leelee} cannot be adapted in a natural way
to the dual Garside structure of the braid group.
For that reason we give a new proof of Theorem
\ref{theoremartin}, in the classical setting,
which can be naturally adapted to the dual setting.
To this end we also introduce a natural notion
of standard curve related to the dual Garside
structure of the braid group. Namely, as round curves determine standard parabolic subgroups of the braid group, with the Artin strucure,  the standard curves in the dual setting will be those determining standard parabolic subgroups, with the dual structure.

It is important to mention that a generalization of Theorem \ref{theoremartin} to
Artin-Tits groups of spherical type,
with the classical Garside structure, is given in \cite{godelle}.
The powerful algebraic methods used in \cite{godelle} are based in the theory of Garside categories and seem to allow further generalization of Theorem \ref{theoremartin} to dual Garside structures \cite{personalgodelle},
although this does not appear in the literature. Here we present simple, geometric proofs, for the particular but important cases of the two well known structures of the braid groups.

Our proof of Theorem \ref{theoremartin}
can be sketched as follows. We show that
if $\mathcal C$ is round and~$[\mathcal C]^{\Delta^px_1}$
is not round, then $[\mathcal C]^{\Delta^px_1\cdots x_m}$
is not round for $m=1,\ldots,r$.
Hence $[\mathcal C]^{\Delta^px_1\cdots x_r}$ cannot be round,
contradicting the hypothesis.

We assume the usual Artin generator $\sigma_i$,
for $i=1,\ldots,n-1$, to be the counterclockwise
half Dehn twist along the segment $[i,i+1]$.
We will see that if $[\mathcal C]^{\Delta^px_1}$ is not round,
then a portion of (a suitable representative of)
$[\mathcal C]^{\Delta^px_1}$ crosses the real line in the
way shown in Figure 1, for some $i<j<k$.
Moreover, $x_1\sigma_j$ is simple, meaning that
the strands of $x_1$ ending at $j$ and $j+1$
do not cross.

\begin{figure}[ht]
\centerline{\includegraphics{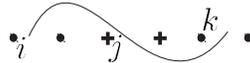}}
\caption{Strands which arrive at positions $j$ and $j+1$ (depicted as crosses) have not crossed in $x_1$.}
\label{imageintro}
\end{figure}
We then prove by recursion that,
as $x_1\cdots x_r$ is in left normal form,
the above properties must hold for
$[\mathcal C]^{\Delta^px_1\cdots x_m}$, for $m=1,\ldots,r$.
That is, some portion of (a suitable representative of)
$[\mathcal C]^{\Delta^px_1\cdots x_m}$ must
cross the real line as in Figure \ref{imageintro},
for some $i_m<j_m<k_m$, where $x_m\sigma_{j_m}$
is simple. This implies in particular that
$[\mathcal C]^{\Delta^px_1\cdots x_r}$ is not round,
showing Theorem \ref{theoremartin}.

This proof can be adapted to the
dual setting as follows. First,
in order to work with the dual Garside structure
of the braid group, it is more convenient
to parametrize~$D_n$ as the unit disk in
$\mathbb C$ with set of punctures
$\{\frac{1}{2}e^{\frac{-2 i k \pi}{n}}, \ k=1,\ldots,n\}$.
Throughout the paper the puncture
$\frac{1}{2}e^{\frac{-2 i k\pi}{n}}$
will be denoted $k$ for brevity,
and the disk $D_n$ with this parametrization
will be denoted $D_n^*$.
We remark that, if one defines standard curves
as isotopy classes of geometric circles, as above,
then the analogue of Theorem \ref{theoremartin}
is not true in the dual setting
(see Example \ref{exampleprime}).
Hence we need a different definition
for standard curves, adapted to the
dual Garside structure:
\begin{defi}[See also Definition \ref{defistandard}]
\label{informaldefistandard}
A simple closed curve in $D_n^*$ is called {\em standard} if it is isotopic to a curve which can be expressed, in polar coordinates, as a function $\rho=\rho(\theta)$, for $\theta \in [0,2\pi[ $. See Figure \ref{imageintrobkl}(a). An isotopy class $[\mathcal C]$ is said to be standard if some (hence every) representative is standard.
\end{defi}
The main result of this paper is the following:
\begin{thm}
\label{theorembkl} Let $\mathcal C$ be a standard curve in $D_n^*$. Let $x=\delta^p x_1\cdots x_r$ be a braid in dual left normal form. If $[\mathcal C]^x$ is standard, then $[\mathcal C]^{\delta^p x_1\cdots x_m}$ is standard for $m=1,\ldots,r$.
\end{thm}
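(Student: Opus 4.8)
\emph{Proof strategy.} The plan is to transport the argument sketched in the introduction for Theorem \ref{theoremartin} to the circular model $D_n^*$, replacing ``round'' by ``radially monotone'' throughout, and replacing the real axis by the star of $n$ radial arcs joining the centre of $D_n^*$ to its boundary, one in each gap between two consecutive punctures. First I would dispose of the power of $\delta$. The braid $\delta$ is represented by the rotation of $D_n^*$ carrying each puncture to the neighbouring one, and this rotation sends a curve of the form $\rho=\rho(\theta)$ to a curve of the same form; hence $[\mathcal C]$ is standard if and only if $[\mathcal C]^{\delta^p}$ is. Writing $\mathcal C'$ for a representative of $[\mathcal C]^{\delta^p}$, one has $[\mathcal C]^{\delta^p x_1\cdots x_m}=[\mathcal C']^{x_1\cdots x_m}$ for every $m$, and $x_1\cdots x_r$ is itself in dual left normal form (with trivial $\delta$-part), so it suffices to treat the case $p=0$.

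Next I would reduce to the following statement, the exact dual counterpart of the introduction's sketch. \emph{Key Proposition: if $\mathcal D$ is standard, $y_1\cdots y_s$ is in dual left normal form, and $[\mathcal D]^{y_1}$ is not standard, then $[\mathcal D]^{y_1\cdots y_t}$ is not standard for all $t=1,\dots,s$.} Granting this, suppose Theorem \ref{theorembkl} failed; after the reduction above we would have $[\mathcal C']^{x_1\cdots x_r}$ standard but $[\mathcal C']^{x_1\cdots x_m}$ not standard for some $m<r$. Let $m_0$ be the least such $m$. Then $[\mathcal C']^{x_1\cdots x_{m_0-1}}$ is standard (by minimality of $m_0$, or by hypothesis when $m_0=1$), and since a final segment of a dual left normal form is again a dual left normal form, applying the Key Proposition with $\mathcal D$ a representative of $[\mathcal C']^{x_1\cdots x_{m_0-1}}$ and the normal form $x_{m_0}\cdots x_r$ shows that $[\mathcal C']^{x_1\cdots x_r}=[\mathcal C]^{x}$ is not standard, a contradiction.

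It remains to prove the Key Proposition, which is where the geometry is used, and I would do this in two steps mirroring the two ingredients of the classical proof. \emph{Step 1 (dual analogue of Figure \ref{imageintro}).} By Definition \ref{defistandard} a class is non-standard exactly when no representative is radially monotone; I would translate this, via minimal position with respect to the radial star, into the assertion that if $[\mathcal D]^{y_1}$ is not standard then some sub-arc of a suitable representative wraps around the punctures in the nested pattern of Figure \ref{imageintrobkl}, the data being three punctures in cyclic order, say $i$, $j$, $k$, and a band generator $a$ joining two of them such that $y_1 a$ is simple --- which, in the non-crossing partition description of dual simple elements, says precisely that the two corresponding strands of $y_1$ are unlinked and not separated by a third strand. \emph{Step 2 (recursion).} Assuming $[\mathcal D]^{y_1\cdots y_{t-1}}$ carries this configuration, with data $i_{t-1}$, $j_{t-1}$, $k_{t-1}$ and band generator $a_{t-1}$, $y_{t-1}a_{t-1}$ simple, I would follow the three distinguished strands under the action of $y_t$ and show the configuration survives, with new data $i_t$, $j_t$, $k_t$ and a band generator $a_t$ with $y_t a_t$ simple. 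The essential input is that the decomposition $y_{t-1}y_t$ is left-weighted: if $y_t$ were able to undo the configuration it would have to begin with a band generator $b$ for which $y_{t-1}b$ is already simple, and this, together with the simplicity of $y_{t-1}a_{t-1}$ and the non-crossing property of the partitions involved, would contradict left-weightedness. Since the configuration then persists up to $t=s$, $[\mathcal D]^{y_1\cdots y_s}$ admits no radially monotone representative, hence is not standard.

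The main obstacle is Step 2, and concretely the passage from the one-dimensional combinatorics of the classical argument --- ``the strands reaching positions $j$ and $j+1$ have not crossed'' --- to the genuinely two-dimensional, circular combinatorics of band generators and non-crossing partitions: one must track not only whether two distinguished strands cross but also whether a third strand lies between them, and how the cyclic order of $i_t$, $j_t$, $k_t$ is reshuffled by each factor $y_t$. A secondary point needing care, and the reason Definition \ref{defistandard} is phrased through radial monotonicity rather than through geometric circles, is that in the dual setting a standard curve may enclose a set of punctures that is not consecutive on the circle; Example \ref{exampleprime} shows that with the narrower notion the statement of Theorem \ref{theorembkl} fails, so the configuration of Figure \ref{imageintrobkl} must be set up as the obstruction to radial monotonicity, not to roundness.
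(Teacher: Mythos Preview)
Your overall strategy --- reduce to $p=0$, extract from a non-standard $\mathcal C^{x_1}$ a persistent obstruction, and propagate it through the normal form using left-weightedness --- is exactly the paper's. Two points in the execution need correction.

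First, the reference $1$-complex the paper uses is the circle $\Gamma$ of radius $\frac{1}{2}$ on which the punctures lie, not a radial star; Figure~\ref{imageintrobkl}(b), which you invoke, is drawn relative to $\Gamma$. With respect to $\Gamma$ the obstruction is encoded by a \emph{pair} $i,j$ of punctures (a ``bending pair'': the reduced word of the curve contains a subword $\cvr i\smallsmile j\cvl$ or $\cvr i\smallsmile j\cll$), not a cyclically ordered triple $i,j,k$; you are transporting the classical three-point configuration of Figure~\ref{imageintro} too literally. The compatibility condition is precisely that $s\,a_{i,j}$ is simple, equivalently that $a_{i,j}$ is not a prefix of the next factor --- so there is no ``third strand between them'' to track.

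Second, your Step~2 is where the proposal is genuinely thin, and the contradiction you sketch (``if $y_t$ undid the configuration it would begin with a band generator $b$ with $y_{t-1}b$ simple, contradicting left-weightedness'') is not how the argument goes, nor is it clear why undoing the configuration would force such a $b$. The paper's recursion (Lemma~\ref{proofbkl}) is constructive: decompose $s_1$ into pairwise disjoint non-crossing polygons $P_1\cdots P_g$, list those transverse to the inner arc $i\smallsmile j$ in the order they are met along the arc, and read off the new bending pair $i',j'$ from the rightmost vertices of the first one or two of these polygons in the two arcs of $\Gamma$ determined by $i\smallsmile j$ (Formulae~($F'$) and Lemma~\ref{keylemmabkl}). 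One then checks from Lemma~\ref{obstructing} that $s_1a_{i',j'}$ is simple, and left-weightedness of $s_1\cdot s_2$ gives that $a_{i',j'}$ is not a prefix of $s_2$. The two-puncture encoding together with the polygon decomposition is exactly what replaces the one-dimensional bookkeeping of the classical case; your proposed three-strand tracking is not needed and would only obscure this.
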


\begin{figure}[ht]
\centerline{\includegraphics{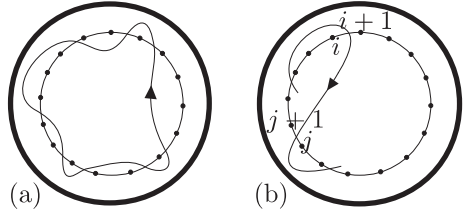}}
\caption{(a) shows a standard curve as in Definition \ref{informaldefistandard}. (b) A part of a nonstandard curve.}
\label{imageintrobkl}
\end{figure}
The proof of this result parallels
the proof of Theorem \ref{theoremartin}
and is sufficiently simple to allow an outline here.
First recall that the usual generators of the
dual Garside structure are the braids
$a_{i,j}=a_{j,i}$ corresponding to the
counterclockwise half Dehn twist along
the chord segment joining the punctures
$i$ and $j$, for each pair ${1\leqslant i,j\leqslant n}$, with $i\neq j$.

We will see that if $[\mathcal C]^{\delta^px_1}$
is not standard, then a portion of
(a suitable representative of)
$[\mathcal C]^{\delta^px_1}$ crosses the circle
of radius $\frac{1}{2}$ in the way shown
in Figure \ref{imageintrobkl}(b),
for some~${1\leqslant i,j\leqslant n}$.
Moreover, $x_1a_{i,j}$ is simple.
We then prove by recursion that, as
$x_1\cdots x_r$ is in left normal form,
the above properties must hold for
$[\mathcal C]^{\delta^px_1\cdots x_m}$, for
$m=1,\ldots,r$. That is, some portion of
(a suitable representative of)
$[\mathcal C]^{\delta^px_1\cdots x_m}$ must cross
the circle of radius $\frac{1}{2}$ as
in Figure \ref{imageintrobkl}(b),
for some ${1\leqslant i_m,j_m\leqslant n}$,
where $x_m a_{i_m,j_m}$ is simple.
This implies in particular that
$[\mathcal C]^{\delta^px_1\cdots x_r}$
is not standard, showing Theorem 3.

The following two sections contain the
detailed proofs of Theorems \ref{theoremartin}
and \ref{theorembkl}, respectively. The last section contains a new algorithm for solving the reducibility problem in braid groups, based on Theorem \ref{theorembkl}.

{\bf{Acknowledgements.}} The author thanks Eddy Godelle for his receptiveness to questions about his papers on parabolic subgroups of Garside groups. The author also wishes to thank Juan Gonz\'{a}lez-Meneses for countless helpful comments on earlier versions of the paper.

\section{The classical Artin-Garside case}\label{artinproof}
This section deals with the classical case.
Throughout the section, we thus assume~$D_n$
to be parametrized as the disk with diameter
$[0,n+1]$ in $\mathbb C$ and with the
points~${1,2,\ldots,n}$
removed.

In order to make rigorous the idea
expressed in Figure \ref{imageintro},
we associate to each isotopy class of
simple closed curves~$[\mathcal C]$ in $D_n$
a unique \emph{reduced} word $W([\mathcal C])$
which we shall define in Subsection~\ref{fromcurvestowords}.
The word $W([\mathcal C])$ allows
a careful description of the action of
positive braids on $[\mathcal C]$;
this will be completed in Subsection
\ref{actionofbraids}.
Subsection~\ref{prooftheoremartin} will
be devoted to the proof of Theorem~\ref{theoremartin}.

\subsection{From curves to words}\label{fromcurvestowords}
We will always assume
that the curves under consideration
are nondegenerate, simple and closed,
so unless otherwise stated,
the word ``curve" alone will mean
``nondegenerate simple closed curve". \\
Let $\mathcal C$ be a curve in $D_n$,
and suppose that it 
crosses the real line a finite number of times, 
and all these crossings are transverse. 
We shall associate
to $\mathcal C$ a word $W(\mathcal C)$
that we now define.

Our strategy to define the word
$W(\mathcal C)$ already mostly appears
in \cite{fgrrw}, Appendix~A.
We proceed as follows.
Choose
a point $\ast$ of $\mathcal C$ which
lies on the real line as well as an
orientation for $\mathcal C$.
Running along $\mathcal C$ following
the chosen orientation,
starting and ending at $\ast$, determines
a word in the
alphabet
$X=\{\smallsmile,\smallfrown,0,\ldots,n\}$
as follows:
each arc through the upper half
plane contributes a letter $\smallfrown$
to the word, each
arc through the lower half plane,
a letter $\smallsmile$,
and each intersection with the
segment~${]i,i+1[}$ yields the letter $i$.

The number corresponding to $\ast$ can be
chosen to be either at the beginning
or at the end of the word.
The word obtained in this way will be denoted
$W(\mathcal C)$ and we call it the
\emph{word associated to $\mathcal C$}.
Choosing another point $\ast$ or putting
the letter determined by the intersection
point $\ast$ at the beginning or at the end
of $W (\mathcal C)$ corresponds to a cyclic
permutation of the letters in $W(\mathcal C)$,
whereas choosing the reverse orientation of
$\mathcal C$ yields the reverse of
$W(\mathcal C)$. Hence, words associated
to curves are to be considered up to
cyclic permutation of their letters
and up to reverse.
\begin{ex}\label{words}\rm
Let $\mathcal C$ be the curve depicted in Figure \ref{curvexampleartin}.
\begin{figure}[ht]
\centerline{\includegraphics{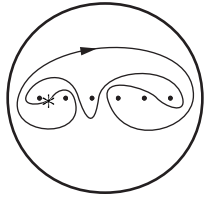}}
\caption{The curve $\mathcal C$ of Example \ref{words}.}
\label{curvexampleartin}
\end{figure}
If we fix $\ast$ in the interval~${]1,2[}$ and the
clockwise orientation, then the word associated to $\mathcal C$ is
$$W(\mathcal C)=1\smallfrown 2\smallsmile 0\smallfrown 6
\smallsmile 3\smallfrown 5\smallsmile 6\smallfrown 3
\smallsmile 2\smallfrown 0\smallsmile.$$
\end{ex}
Notice that two curves related
by an isotopy of $D_n$ fixing the real
diameter setwise have the same associated word.

We say that the word associated to
$\mathcal C$  is \emph{reduced} if it
does not contain any subword of the form
$i\smallsmile i$ or $i\smallfrown i$.
We say that a curve $\mathcal C$ is
\emph{reduced} if its associated
word~$W(\mathcal C)$ is reduced.

Notice that reduced curves are exactly
those which do not bound any bigon
(see~\cite{fgrrw}) together with the real line.
According to \cite{fgrrw}, every curve
$\mathcal C$ is isotopic to a reduced one
$\mathcal C^{red}$, which is unique up to
isotopy of $D_n$ fixing the real diameter setwise.
We thus may finally define, for each isotopy
class of curves $[\mathcal C]$ in $D_n$,
its associated reduced word as
$W([\mathcal C])=W(\mathcal C^{red})$.

\subsection{The action of positive braids}\label{actionofbraids}
Let $B_n^{+}$ be the submonoid of $B_n$
generated by $\sigma_1,\ldots,\sigma_{n-1}$,
called the monoid of positive braids.
Instead of the usual Artin generators,
it will be convenient to work with
the following bigger generating set of $B_n^{+}$:
$$\{\Sigma_{p,k}=\sigma_{p-1}\ldots\sigma_{p-k},\ \ 1\leqslant k<p\leqslant n\}.$$

According to our previous convention
that $\sigma_i$ is a counterclockwise
half-Dehn twist along the segment $[i,i+1]$,
the braid $\Sigma_{p,k}$ corresponds
to a move of the puncture numbered $p$ through
the upper half plane up to the position $p-k$ while
the punctures~${p-1,p-2,\ldots,p-k}$
are shifted one position to the right.

Given the isotopy class of a
curve $\mathcal C$ and a positive
braid $x$ we are going to describe
which transformations have to be
performed on the word $W=W([\mathcal C])$
in order to obtain the word
$W([\mathcal C]^x)$. To this purpose
we first focus on the case $x=\Sigma_{p,k}$.

Since the generators we are considering
are mainly moves of punctures in the upper
half plane, we assume that the action of the braids
$\Sigma_{p,k}$
mainly modifies the upper arcs
occuring in~$\mathcal C^{red}$
(from which
new lower arcs can arise) while the
lower arcs are only modified by
translating their endpoints.
This will be described by the following formulae, which are also
depicted in figure~\ref{figureformulae}.

We define, for $i<j$:

$$
(i\smallfrown j)^{\Sigma_{p,k}}=
\left\{
\begin{array}{llr}
i\smallfrown j & \text{if $[p-k,p[\ \cap\ \{i,j\}=\emptyset$} & (\text{$F1$}) \\
(i+1)\smallfrown (j+1) & \text{if $[p-k,p[\ \cap\ \{i+1,j\}=\{i+1,j\}$} & (\text{$F2$})\\
(i+1)\smallfrown (p-k) \smallsmile (p-k-1)\smallfrown j &
\text{if $[p-k,p[\ \cap\ \{i,j\}=\{i\}$} & (\text{$F3$})\\
i\smallfrown (p-k-1)\smallsmile(p-k)\smallfrown (j+1) & \text{if $[p-k,p[\ \cap\ \{i+1,j\}=\{j\}$} & (\text{$F4$})
\end{array}
\right.
$$
We can define in the same way
$(j\smallfrown i)^{\Sigma_{p,k}}$.
It suffices to take the reverse of the
above formulae (the picture is exactly the same than in
Figure \ref{figureformulae}).

\begin{figure}[ht]
\centerline{\includegraphics{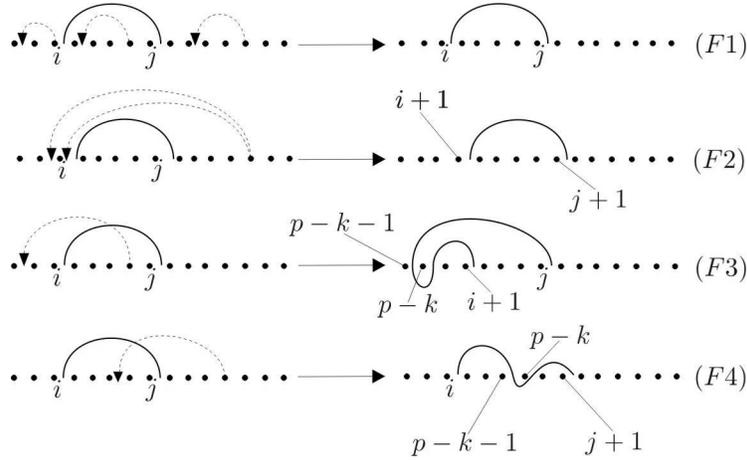}}
\caption{How the action of the braid $\Sigma_{p,k}$ does transform
upper arcs? In dashed lines is represented the trace of the move of
the puncture initially numbered $p$ (up to the position $p-k$).
In continuous lines the arc $i\smallfrown j$ on the left hand side and its image $(i\smallfrown j)^{\Sigma_{p,k}}$ on the right hand side.}
\label{figureformulae}
\end{figure}

Now, let $\widetilde{W}$ be the word
obtained by replacing each subword
$i\smallfrown j$ in $W$ by the corresponding
subword $(i\smallfrown j)^{\Sigma_{p,k}}$.
This transforms the lower arcs
$(i\smallsmile j)$ in $W$, just by
translating their endpoints.

Notice that $\widetilde W$ is not
necessarily reduced, so that
$W([\mathcal C]^{\Sigma_{p,k}})$ and
$\widetilde W$ are possibly not the same.
The following explains how to turn
$\widetilde W$ into the reduced word
$W([\mathcal C]^{\Sigma_{p,k}})$:

\begin{lem}
\label{reductionartin}
Let $[\mathcal C]$ be an isotopy class of curves and
$W=W([\mathcal C])$.
Let ${1\leqslant k<p\leqslant n}$.
Let $\widetilde{W}$ be as above, and let
$W^{\Sigma_{p,k}}$ be the word obtained
from~$\widetilde{W}$ by removing all
instances of subwords of the form
$p\smallsmile p\smallfrown$.
Then~${W^{\Sigma_{p,k}}=W([\mathcal C]^{\Sigma_{p,k}})}$.
\end{lem}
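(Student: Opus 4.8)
The plan is to show that the word $\widetilde W$, obtained by substituting each upper arc $i\smallfrown j$ in $W$ by its image under the formulae $F1$--$F4$, describes a genuine (possibly non-reduced) curve isotopic to $\mathcal C^x$ with $x=\Sigma_{p,k}$, and then to identify exactly which bigons this curve bounds with the real line. First I would verify that $\widetilde W$ is a legitimate closed-curve word: consecutive letters must alternate correctly between arc-symbols $\smallsmile,\smallfrown$ and position-numbers, and the position-numbers on either side of each arc-symbol must match up across the substitution. This is a routine check on the four formulae: $F1$ and $F2$ preserve the pattern trivially (up to the shift $\ell\mapsto\ell+1$ of those positions lying in $[p-k,p[$), while $F3$ and $F4$ replace a single $\smallfrown$-arc by an $\smallfrown\smallsmile\smallfrown$ alternation, so one must check the shared endpoints $i$ vs.\ $i+1$, $j$ vs.\ $j+1$, and the newly created endpoints $p-k-1$, $p-k$ are consistent with the neighbouring lower arcs (whose endpoints are simultaneously translated by the same rule). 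Since every puncture index $\ell\in[p-k,p[$ that appears as an endpoint of a lower arc is shifted to $\ell+1$ and the puncture $p$ is sent to $p-k$, the translations are globally consistent, so $\widetilde W$ is the word of an honest curve $\widetilde{\mathcal C}$.

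Next I would argue that $\widetilde{\mathcal C}$ is isotopic to $\mathcal C^{\Sigma_{p,k}}$. The geometric content is exactly Figure \ref{figureformulae}: $\Sigma_{p,k}$ is realised by dragging puncture $p$ through the upper half-plane to position $p-k$, pushing punctures $p-1,\dots,p-k$ one step right; an upper arc $i\smallfrown j$ is carried along by this ambient motion, and the four cases enumerate the possible positions of its endpoints relative to the interval $[p-k,p[$, i.e.\ whether each endpoint is caught by the dragged region or not. Cases $F1$/$F2$ are the arcs untouched (up to the re-indexing), and $F3$/$F4$ are the arcs that get hooked once around the moving puncture, producing the little detour $\cdots(p-k)\smallsmile(p-k-1)\cdots$ through the lower half-plane. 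Lower arcs are only relabelled. So after the substitution we have a correct (non-reduced) picture of $\mathcal C^{\Sigma_{p,k}}$, hence $W([\mathcal C]^{\Sigma_{p,k}})$ is obtained from $\widetilde W$ by full bigon reduction, i.e.\ by deleting all subwords $\ell\smallsmile\ell$ and $\ell\smallfrown\ell$ repeatedly until none remain.

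The heart of the lemma is therefore a combinatorial claim: the only reductions that can ever occur in $\widetilde W$ are removals of subwords of the precise shape $p\smallsmile p\smallfrown$, and after performing all of those the resulting word has no bigons left. For the first half, I would note that a reducible subword $\ell\smallsmile\ell$ or $\ell\smallfrown\ell$ in $\widetilde W$ must straddle two arcs $\alpha,\beta$ that were created by the substitution. Since $W$ itself is reduced, no such coincidence is inherited directly; a new one can only be manufactured by the new endpoints introduced in $F3$ and $F4$, which are $p-k-1$ and $p-k$, or by the shift $F2$. A short case analysis of which images of adjacent arcs $i\smallfrown j$ and $j\smallsmile j'$ (or $j\smallfrown j'$) in $W$ can abut shows that a cancellation is forced precisely when an $F3$-image ending in $\cdots(p-k-1)\smallfrown j$ meets... — more efficiently, one observes that the dragged puncture $p$ sits at position $p-k$ after the move, so the arc-segments immediately flanking it on the new curve are exactly the detours $\smallsmile(p-k-1)\smallfrown$ created in $F3$/$F4$, and two such consecutive detours around the same point $p-k$ give the cancelling pattern written with the \emph{new} index name $p$ for the puncture (here one must be careful that the lemma's statement phrases the reduction in the post-move numbering, where the puncture formerly called $p$ occupies slot $p-k$ but the real intersection points flanking it are still labelled by the segments $]p-k-1,p-k[$ and $]p-k,p-k+1[$; I'd make this labelling convention explicit). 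For the second half, that no further bigons survive, I would use that $\mathcal C$ was reduced and that $\Sigma_{p,k}$ is a homeomorphism, so $\mathcal C^{\Sigma_{p,k}}$ has a reduced representative whose word has the same length-minus-cancellations count; counting the letters removed by the $p\smallsmile p\smallfrown$ deletions and matching with the expected geometric intersection number finishes it.

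\textbf{Main obstacle.} The delicate point is bookkeeping the puncture re-indexing consistently between the "before" numbering used for $W$ and the formulae $F1$--$F4$, and the "after" numbering in which the statement $W^{\Sigma_{p,k}}=W([\mathcal C]^{\Sigma_{p,k}})$ is asserted; getting the claim "the only cancellations are $p\smallsmile p\smallfrown$" right hinges on tracking that the moved puncture ends at slot $p-k$ yet the cancelling subword is recorded with the letter $p$. The genuinely mathematical content — that substituting $F1$--$F4$ yields a correct picture of the image curve — is geometrically transparent from Figure \ref{figureformulae}; the work is in making the combinatorial reduction statement airtight.
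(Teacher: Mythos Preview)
Your overall architecture is sound and matches the paper's: establish that $\widetilde W$ represents a curve isotopic to $\mathcal C^{\Sigma_{p,k}}$, then identify exactly which bigons occur and show that removing them once suffices. However, your analysis of \emph{where} the bigons arise is incorrect, and this stems from the confusion you flag under ``Main obstacle''.

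There is no re-indexing issue. The letters in $W$ and $\widetilde W$ are always labels of real segments $]i,i+1[$, never names of punctures; the symbol $p$ in the statement literally means the segment $]p,p+1[$. The bigon $p\smallsmile p$ does \emph{not} come from the detours $(p-k)\smallsmile(p-k-1)$ or $(p-k-1)\smallsmile(p-k)$ created inside $F3$/$F4$ near the new position $p-k$ of the dragged puncture --- those subwords have distinct endpoints and never form bigons. Instead, the bigon arises at the \emph{vacated} position $p$: a lower arc $c\smallsmile d$ of $W$ has each endpoint shifted by $0$ or $1$ (according to whether it lies in $[p-k,p[$), so the only way to obtain equal endpoints is from $(p-1)\smallsmile p$, whose left endpoint $p-1\in[p-k,p[$ shifts to $p$ while its right endpoint $p\notin[p-k,p[$ stays fixed, yielding $p\smallsmile p$. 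This is the paper's argument, and once you see it, the claim that no $c\smallfrown c$ can appear is immediate.

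For the second half (no further reductions), your proposed counting via geometric intersection number is vague and would require independent knowledge of that number. The paper instead checks directly that each occurrence of $p\smallsmile p$ in $\widetilde W$ sits inside a longer pattern $a\smallfrown p\smallsmile p\smallfrown b$ with $a\neq b$, by a short case analysis on the original subword $h\smallfrown(p-1)\smallsmile p\smallfrown l$ of $W$ (three cases for the relative positions of $h,l$); this shows at once that deleting $p\smallsmile p\smallfrown$ leaves a reduced word.
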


\begin{proof}
We observe that the formulae
defining $(i\smallfrown j)^
{\Sigma_{p,k}}$
do not contain any subword of the
form $c\smallsmile c$ nor $c\smallfrown c$.
Thus the possible instance of such
a subword in $\widetilde{W}$ necessarily
arises from the transformation of a lower arc.
Notice that a lower arc~$c\smallsmile d$
can only be transformed into an
arc $k\smallsmile l$, with $k\in\{c,c+1\}$
and $l\in\{d,d+1\}$.
Hence the latter arc forms a bigon with the
horizontal axis if and only if $c+1=d$,
$k=c+1$ and $l=d$ (up to reverse we may
suppose that $c<d$),
in which case $\widetilde{W}$ contains the
subword $d\smallsmile d$.
By the formulae defining
$(i\smallfrown j)^{\Sigma_{p,k}}$, this happens
if and only if $d=p$, that is
$c\smallsmile d=(p-1)\smallsmile p$.
In particular we have shown that no
subword of the form $c\smallfrown c$
can arise in $\widetilde{W}$.\\
We now claim that removing the
subwords $p\smallsmile p\smallfrown$
is sufficient in order to turn~$\widetilde{W}$
into a reduced word; that is, every
sequence $p\smallsmile p\smallfrown$ in
$\widetilde{W}$ is a subsequence of a larger one,
of the form $a\smallfrown p\smallsmile p\smallfrown b$,
with $a\neq b$.\\
Let $h\smallfrown (p-1)\smallsmile p\smallfrown l$ be a subword
of $W$.
As $\mathcal C$ is a simple curve,
$h,l$ must be in one of the following
three cases (see Figure \ref{figurelemmared}):
\begin{itemize}
\item[1)] $h<p-1<p<l$,
\item[2)] $l\leqslant h<p-1<p$,
\item[3)] $p-1<p<l\leqslant h$.
\end{itemize}
We shall show that in the three cases, the
subword $a\smallfrown p\smallsmile p\smallfrown b$
of $\widetilde{W}$ yielded by the action
of $\Sigma_{p,k}$ satisfies
$$a\in\{p-k,\ldots,p-1\}$$
and
$$b\in\{0,\ldots,p-k-1\}\cup\{p+1,\ldots,n\},$$
and thus $a\neq b$, as claimed.

\begin{figure}[ht]
\centerline{\includegraphics{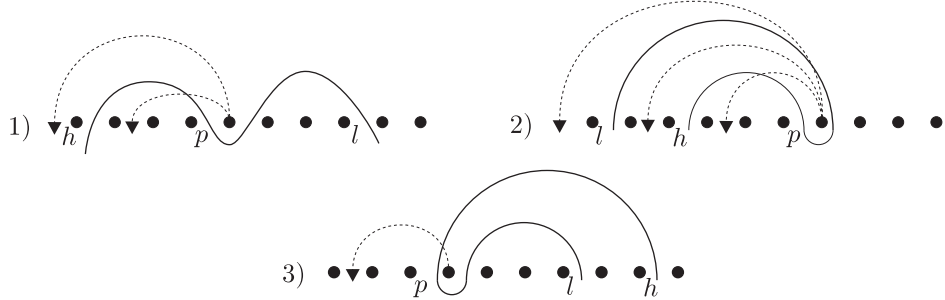}}
\caption{The possible cases of subwords $h\smallfrown (p-1)\smallsmile p\smallfrown l$ appearing in $W$, with the distinct possible moves
of the point $p$. }
\label{figurelemmared}
\end{figure}
In the first two cases the arc $(h\smallfrown p)$
will yield
$a=h+1$ or $a=p-k$,
satisfying $a\in \{p-k,\ldots,p-1\}$.
In the third case we will also obtain $a=p-k$.
On the other hand, in the first and
third cases we have $b=l$ (hence $b>p$);
in the second case, either $b=l$ if $p-k\geqslant l+1$
or $b=p-k-1$ otherwise (and $b<p-k$).

This completes the proof of the lemma.
\end{proof}

We thus can associate to each isotopy class
of curves $[\mathcal C]$ and each braid $\Sigma_{p,k}$
the word $W([\mathcal C])^{\Sigma_{p,k}}$
defined thanks to the above construction.
We are now able to define, for each
isotopy class of curves $[\mathcal C]$, the
image of its associated reduced
word $W=W([\mathcal C])$ under the action
of some positive braid $x$.
Indeed, if $x$ is expressed as a product
$x=\prod_{m=1}^{r} \Sigma_{p_m,k_m}$,
then by Lemma \ref{reductionartin},
the inductive formula
$$W^x=
({W^{\prod_{m=1}^{r-1} \Sigma_{p_m,k_m}}})^{\Sigma_{p_r,k_r}}$$
defines a word on $X$ which is the reduced
word associated to
$[\mathcal C]^x$ (hence
it does not depend on
the chosen decomposition of $x$ in terms
of braids $\Sigma_{p,k}$).
This can be written
${W({[\mathcal C]}^x)=W([\mathcal C])^x}$.

\subsection{Proof of Theorem \ref{theoremartin}}\label{prooftheoremartin}
By abuse
of notation, instead of speaking about
isotopy classes of simple closed
curves, we will simply
speak about curves, meaning that we
are considering the reduced
representatives. Consequently,
the letter $\mathcal C$ will denote the reduced
representative of
the isotopy class of the curve $\mathcal C$
and its associated reduced word
will be denoted $W(\mathcal C)$.

We introduce a class of curves which
is larger than the class of round curves:
\begin{defi}\label{D:presquerond}
A curve $\mathcal C$ will be
called \emph{almost-round}
if the word $W(\mathcal C)$ (up
to cyclic permutation of its letters
and reversing) can be
written as $W(\mathcal C)=w_1w_2$, where
the arcs in $w_1$
are oriented from left to right whereas
those in $w_2$ are oriented from right to
left. (See Figure \ref{F:examplepresquerond}).
\end{defi}

\begin{figure}[htb]
\centerline{\includegraphics{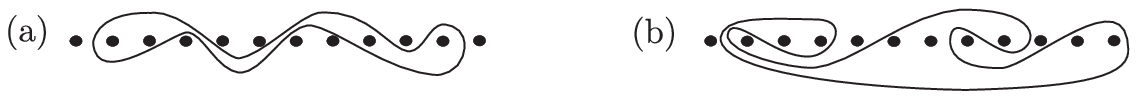}}
\caption{The curve in Part (a) is almost-round since its associated reduced word can be written as
$$W=w_1w_2=(1\smallfrown 4\smallsmile 6\smallfrown9\smallsmile 10\smallfrown11)(\smallsmile8\smallfrown6\smallsmile4\smallfrown3\smallsmile).$$ The curve in Part (b) is not almost-round.}
\label{F:examplepresquerond}
\end{figure}
The reduced words associated to almost-round
curves satisfy the following necessary condition:

\begin{lem}
\label{lemma2}
Let $\mathcal C$ be an almost-round, not round curve. Then
$W(\mathcal C)$ (up to cyclic permutation
and reversing) must contain
a subword of the form
$i\smallfrown j\smallsmile l$
for some $0\leqslant i<j<l\leqslant n$.

\end{lem}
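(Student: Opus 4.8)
The plan is to argue directly from the structure of the word of an almost-round curve, using the fact that the curve is simple (no self-intersections) and not round. Recall that since $\mathcal C$ is almost-round, we may write $W(\mathcal C) = w_1 w_2$ where all arcs of $w_1$ go left-to-right and all arcs of $w_2$ go right-to-left. Geometrically, $w_1$ describes the "upper part'' of the curve traversed in one direction and $w_2$ the "lower part'' traversed back; the curve meets the real line at the integer-interval points recorded by the shared endpoints of $w_1$ and $w_2$. The numbers appearing in $w_1$, read in order, form a sequence $i_0, i_1, \ldots, i_s$ of crossing points (and likewise for $w_2$), and since $\mathcal C$ is simple these sequences cannot "turn back'' arbitrarily.

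First I would make precise the claim that, in $w_1$, the sequence of crossing numbers must be \emph{unimodal}: it increases up to a maximum and then decreases (and symmetrically $w_2$ is unimodal the other way, or both are monotone). This is forced because two upper arcs $i \smallfrown j$ and $i' \smallfrown j'$ in $w_1$ that are "nested incorrectly'' or "linked'' would create an intersection point of $\mathcal C$ with itself in the upper half plane, contradicting simplicity; a careful case check on the relative positions of $\{i,j\}$ and $\{i',j'\}$ (all arcs oriented left to right) rules out everything except nesting/disjointness compatible with unimodality. Then I would observe: if the sequence of crossing numbers in $w_1$ were \emph{monotone} (say strictly increasing), and similarly the return sequence in $w_2$ monotone, then $\mathcal C$ would bound a disk whose boundary crosses the real line only in two "extreme'' intervals with all crossings of $w_1$ lying between consecutive integers in increasing order — this is exactly the picture of a round curve, contradicting the hypothesis that $\mathcal C$ is not round.

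Hence $w_1$ (or, by the same reasoning applied after possibly exchanging the roles of $w_1$ and $w_2$ via a cyclic permutation/reversal, one of the two halves) must be genuinely unimodal with a strict local maximum: there are consecutive arcs $i \smallfrown j$ and then an arc whose first endpoint is $j$ but whose second endpoint $l$ satisfies $l < j$. Reading $W(\mathcal C)$ around, between these two arcs there is a single crossing letter, and the subword is $i \smallfrown j \smallsmile l$ (the middle symbol is $\smallsmile$ because after the arc $i\smallfrown j$ through the upper half plane the curve crosses the real line and the next arc of $w_1$ is again an upper arc, so the intervening arc — which belongs to the "other'' traversal — is a lower arc; more carefully one checks the alternation $\smallfrown, \smallsmile, \smallfrown, \ldots$ forced by $\mathcal C$ being a reduced curve). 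Since $i\smallfrown j$ has $i<j$ and the next arc has endpoints $j > l$, we get $i < j$ and $l < j$; reversing $i\smallfrown j$ if necessary (allowed, since words are considered up to reversal) and relabeling, we obtain a subword $i \smallfrown j \smallsmile l$ with $i < j$ and $l < j$, and after one more reversal we can arrange $i < l$, giving $0 \leqslant i < l < j \leqslant n$ — which is the asserted form $i\smallfrown j \smallsmile l$ with $i<j<l$ after renaming.

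The main obstacle I expect is the bookkeeping in the second step: translating the combinatorial unimodality of the crossing-number sequence into the precise geometric statement that monotone halves force roundness, and then extracting the subword with the correct orientation symbols ($\smallfrown$ then $\smallsmile$) rather than some other pattern. This requires being careful about how the two halves $w_1, w_2$ interleave along the real line and about the reduced-word alternation condition; it is essentially a finite case analysis, but getting the inequalities $i<j<l$ lined up with the allowed reversals and cyclic permutations is where the argument must be written with care. Everything else (simplicity ruling out linked arcs, reducedness forbidding $i\smallsmile i$ and $i\smallfrown i$) is routine given the setup in Subsection~\ref{fromcurvestowords}.
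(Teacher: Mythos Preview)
Your argument rests on a misreading of the definition of almost-round. By Definition~\ref{D:presquerond}, every arc in $w_1$ is oriented left to right, so the sequence of crossing numbers in $w_1$ is already \emph{strictly increasing}, not merely unimodal. Consequently your claim that ``if the sequences in $w_1$ and $w_2$ were monotone then $\mathcal C$ would be round'' is false: monotonicity of these sequences is exactly what almost-roundness asserts, and the almost-round curve in Figure~\ref{F:examplepresquerond}(a) is a counterexample. Your search for a strict local maximum inside $w_1$ therefore cannot succeed, and the subword you end up extracting, $i\smallfrown j\smallsmile l$ with $i<j$ and $l<j$, lives at the $w_1$--$w_2$ junction and has the wrong inequalities. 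The attempted repair by reversal fails too: reversing $i\smallfrown j\smallsmile l$ yields $l\smallsmile j\smallfrown i$, which is not of the required shape $\ast\smallfrown\ast\smallsmile\ast$.

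The actual argument is simpler than you are making it. Since the numbers in $w_1$ strictly increase, any three consecutive ones already satisfy $i_1<i_2<i_3$; the only issue is to ensure the arc symbols occur in the order $\smallfrown$ then $\smallsmile$. The paper secures this by starting at the unique horizontal local minimum and choosing the clockwise orientation, so that $w_1$ necessarily begins with an upper arc: if $w_1$ contains at least three numbers, $i_1\smallfrown i_2\smallsmile i_3$ is the desired subword. If $w_1$ has only two numbers, one reverses orientation and uses the (now left-to-right) word $w_2$, which begins with a lower arc and, since $\mathcal C$ is not round, must have at least four numbers; then $i'_2\smallfrown i'_3\smallsmile i'_4$ works. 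No unimodality or simplicity case analysis is needed.
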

\begin{proof}
The curve $\mathcal C$ has a unique local minimum
(and a unique local maximum) in the horizontal direction.
Let $\ast$ be this local minimum.
Choose the clockwise orientation for $\mathcal C$,
and notice that the arcs oriented
from left to right starting at~$\ast$
form a subword $i_1\smallfrown i_2\ldots\smallfrown i_k$
(ending with an upper arc, as $\mathcal C$ is simple).
If~${k>}2$, then $i_1\smallfrown i_2\smallsmile i_3$ satisfies
the required hypothesis.
If $k=2$, choose the counterclockwise orientation.
As $\mathcal C$ is not round, $W(\mathcal C)$ must
start with a
subword $i'_1\smallsmile i'_2\ldots \smallsmile i'_k$
with $k\geqslant 4$
and $i'_2\smallfrown i'_3\smallsmile i'_4$
does the job.
\end{proof}
Before proving Theorem \ref{theoremartin},
we introduce some more notation. We will
say that some $0<j<n$ is a
\emph{bending point for (or bends)}
a curve $\mathcal C$ if the reduced
word $W(\mathcal C)$ admits
(up to cyclic permutation of its letters
and up to reverse) a subword of the
form $i\smallfrown j\smallsmile l$,
for some $0\leqslant i<j<l\leqslant n$.
Lemma \ref{lemma2} thus asserts that an
almost-round, not round curve admits at
least one bending point.
Given a simple braid $s$, and a
bending point $j$ for some
curve $\mathcal C$, $j$ will be said to
be \emph{compatible} with $s$ if the strands
$j$ and $j+1$ of $s$ do not cross in $s$.

The following is the key for Theorem \ref{theoremartin}:
\begin{lem}\label{mainartin}
Let $s_1,s_2$ be two simple braids such
that $s_1\cdot s_2$ is in left normal form,
and let $\mathcal C$ be a curve.
Let $j$ be a bending point for $\mathcal C$,
compatible with $s_1$. Then there exist some
bending point $j'$ for $\mathcal C^{s_1}$,
which is compatible with $s_2$.
\end{lem}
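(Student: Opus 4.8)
The plan is to track how a single bending point $j$ of $\mathcal{C}$ is affected by the action of $s_1$, using the word combinatorics developed in Subsections \ref{fromcurvestowords} and \ref{actionofbraids}. Since $j$ is compatible with $s_1$, the strands $j$ and $j+1$ do not cross in $s_1$; I would first translate this into a statement about a decomposition $s_1 = \prod \Sigma_{p_m,k_m}$ in which none of the elementary moves $\Sigma_{p_m,k_m}$ ``separates'' the interval $]j,j+1[$ in a bad way — more precisely, the position of that interval is carried along rigidly (possibly shifted) by the sequence of moves, because no puncture is ever dragged across it through the upper half-plane without its partner. The existence of such an ordered factorization compatible with the non-crossing of strands $j,j+1$ is the technical backbone; I expect this to be the main obstacle, since it requires choosing the decomposition of $s_1$ carefully (e.g. peeling off generators $\Sigma_{p,k}$ in an order dictated by the permutation underlying $s_1$) and checking that at every intermediate stage the relevant interval survives as an interval $]j_m, j_m+1[$.

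Granting that, the core of the argument is local. I would look at the subword $i\smallfrown j\smallsmile l$ of $W(\mathcal{C})$ witnessing that $j$ bends $\mathcal{C}$, with $i<j<l$, and apply the transformation formulae (F1)--(F4) together with the reduction Lemma \ref{reductionartin}, one generator $\Sigma_{p_m,k_m}$ at a time. The key observation is that an arc $i\smallfrown j$ with $j$ immediately to the left of the interval $]j,j+1[$ cannot be ``used up'': since $j,j+1$ are not separated, at each step the endpoint $j$ is either fixed (F1), or uniformly translated with its neighbourhood (F2), or the formulae F3/F4 create a new upper arc whose relevant endpoint still sits just left of the image interval. In all cases the image of $i\smallfrown j$ under $\Sigma_{p_m,k_m}$ still contains, after reduction, an arc of the form $i'\smallfrown j_m$ for the translated position $j_m$, and symmetrically the lower arc $j\smallsmile l$ becomes (after translating endpoints) an arc $j_m\smallsmile l'$ with $j_m<l'$. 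Concatenating over all $m$, we conclude that $W(\mathcal{C}^{s_1})$ contains a subword $i'\smallfrown j'\smallsmile l'$ with $i'<j'<l'$, i.e. $j' := j_r$ is a bending point for $\mathcal{C}^{s_1}$.

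It then remains to show that $j'$ is compatible with $s_2$, i.e. that the strands $j'$ and $j'+1$ of $s_2$ do not cross. Here I would invoke the hypothesis that $s_1\cdot s_2$ is in \emph{left normal form}. The position $j'$ is precisely the image of the interval $]j,j+1[$ under the permutation of $s_1$; non-crossing of strands $j,j+1$ in $s_1$ says that $\sigma_j$ is ``absorbed on the right'' of $s_1$, equivalently $s_1\sigma_j$ is still simple. Left normal form of $s_1\cdot s_2$ means the finishing set of $s_1$ contains the starting set of $s_2$; a standard Garside argument (the finishing set of $s_1$ equals the set of $\sigma_j$ such that $s_1\sigma_j$ is simple, and this is contained in the starting set of $\Delta s_2^{-1}$, hence disjoint from what $s_2$ ``begins with'' in the relevant sense) shows that $s_2\sigma_{j'}$ must in turn be simple, i.e. the strands $j',j'+1$ of $s_2$ do not cross. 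This is exactly the compatibility of $j'$ with $s_2$, completing the proof.

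**What I expect to be delicate** is making the first paragraph rigorous without drowning in case analysis: one needs a clean bookkeeping device for ``the interval $]j,j+1[$ travels coherently through the sequence of generators,'' and then to match it up precisely with the permutation-level statement used in the third paragraph. A cleaner route, which I would pursue if the direct one gets unwieldy, is to avoid factoring $s_1$ altogether and instead argue directly with the word $W(\mathcal{C})^{s_1}$ via the inductive formula at the end of Subsection \ref{actionofbraids}, isolating at each stage the triple of letters descended from the original $i\smallfrown j\smallsmile l$ and showing the middle descendant is always a bending-point letter that moves by the permutation of the partial product.
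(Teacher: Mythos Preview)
Your overall strategy---factor $s_1$ into elementary pieces $\Sigma_{p,k}$, track a bending configuration through the factorization via the formulae (F1)--(F4) and Lemma~\ref{reductionartin}, then invoke left-normal-form to pass compatibility to $s_2$---is exactly the paper's approach. But two points in your execution are genuinely wrong and would make the argument fail as written.

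First, the claim in your third paragraph that ``$j'$ is precisely the image of the interval $]j,j+1[$ under the permutation of $s_1$'' is false. Non-crossing of the strands \emph{starting} at $j$ and $j+1$ does not force them to end at adjacent positions (take $s_1=\sigma_2$ in $B_3$: strands $1,2$ don't cross but end at positions $1$ and $3$). So you cannot hope to track the original pair $(j,j+1)$ and read off $j'$ from where it lands. The paper instead uses the \emph{canonical} decomposition $s_1=\prod_{p=2}^{n}\Sigma_{p,k_p}$ and maintains, after each factor, a bending point $\iota$ together with the invariant ``the strands currently at positions $\iota$ and $\iota+1$ have not yet crossed in $s_1$''. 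The pair sitting at $\iota,\iota+1$ changes along the way (it is not the original pair), and the final $j'=\iota_n$ comes with the property that the strands \emph{ending} at $j',j'+1$ do not cross in $s_1$, i.e.\ $s_1\sigma_{j'}$ is simple.

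Second, your Garside bookkeeping is reversed. Compatibility of $j$ with $s_1$ means the strands \emph{starting} at $j,j+1$ don't cross, i.e.\ $\sigma_j\notin S(s_1)$; this is \emph{not} equivalent to $s_1\sigma_j$ being simple (that is $\sigma_j\notin F(s_1)$). Likewise, what you need for ``$j'$ compatible with $s_2$'' is $\sigma_{j'}\notin S(s_2)$, not $s_2\sigma_{j'}$ simple. The correct chain is: the tracking argument yields $\sigma_{j'}\notin F(s_1)$; then $S(s_2)\subseteq F(s_1)$ (left normal form) gives $\sigma_{j'}\notin S(s_2)$, which is the desired compatibility. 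Once you fix the invariant you carry (strands \emph{currently} at the bending position haven't crossed) and straighten out the $S$/$F$ direction, your sketch becomes the paper's proof; the canonical factorization $\prod_{p=2}^n\Sigma_{p,k_p}$ is precisely the ``clean bookkeeping device'' you were looking for.
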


In order to prove this lemma, we recall
that simple braids are those positive braids in which
any pair of strands crosses at most
once (see~\cite{elrifaimorton}). We also
recall the following well-known fact
(see~\cite{ghys}): any simple braid $s$
can be decomposed in a unique manner as
a product
$$s=\prod_{p=2}^{n}\Sigma_{p,k_p},$$
where $0\leqslant k_p<p$
and $\Sigma_{p,0}=1$.
This allows us to see $s$ as
a sequence of moves of the strands
numbered from 2 to $n$ (in this order),
each of them moving $k_p$ positions to the
left (the number~$k_p$ depending on $s$).
Notice that the $p$th strand of $s$ does not
cross (in $s$) any of the strands being to the
left of the position $p-k_p$.

{\textit{Proof of Lemma \ref{mainartin}.}
As $s_1\cdot s_2$ is in left normal form,
it is sufficient to show that there exist
some bending point $j'$ for $\mathcal C^{s_1}$
such that the strands numbered $j'$ and $j'+1$
at the end of $s_1$ do not cross in $s_1$
(\cite{elrifaimorton}).

Let
$$s_1=(\prod_{p=2}^{j-1} \Sigma_{p,k_p})\cdot\Sigma_{j,k_j}\cdot\Sigma_{j+1,k_{j+1}}\cdot(\prod_{p=j+2}^{n} \Sigma_{p,k_p})$$
be
the decomposition of $s_1$ in terms of
the braids $\Sigma_{p,k}$.
Acting on $\mathcal C$ by $s_1$ following
the above factorization, we will be able
to find at each step a bending point $\iota$
for the resulting curve such that the strands
at respective positions $\iota$ and $\iota+1$
at the end of this step have not yet
crossed in~$s_1$.

The first factor $\prod_{p=2}^{j-1} \Sigma_{p,k_p}$
involves moves of punctures which are to the
left of $j$. Hence, due to Formulae ($F1$) and ($F3$)
and Lemma \ref{reductionartin}, $j$ is still
a bending point for the resulting curve.
Moreover strands ending at $j$ and $j+1$
have not crossed in this first factor.

The factor $\Sigma_{j,k_j}$ moves the $j$th
puncture to the left; once again due to Formulae
($F1$) and ($F3$) and Lemma \ref{reductionartin},
the resulting reduced word admits a
subword $i'\smallfrown j\smallsmile$ (with $i'<j$); $j$
is still a bending point for the corresponding curve.
Notice that the former $j$th puncture lies
(now at position $j-k_j$) under the arc $i'\smallfrown j$
and that strands ending at positions $j$
and $j+1$ have not crossed.

Then, the factor $\Sigma_{{j+1},k_{j+1}}$
moves the $j+1$st puncture to the left.
As $j$ was a bending point for $\mathcal C$,
compatible with $s_1$, the strands which started at positions $j$ and $j+1$ do not cross in $s_1$, hence this movement of the $j+1$st puncture cannot
exceed the position $j-k_j$: it ends at
the diameter of the upper arc $i'\smallfrown j$
and by Formula ($F4$) and Lemma \ref{reductionartin},
the position $\iota_{j+1}:=j-k_{j+1}$ is now a
bending point for the resulting curve
(see Figure \ref{figureclaim3}).
Moreover by construction the strands
ending at $\iota_{j+1}$ and $\iota_{j+1}+1$
have not crossed in $s_1$.
\begin{figure}[ht]
\centerline{\includegraphics{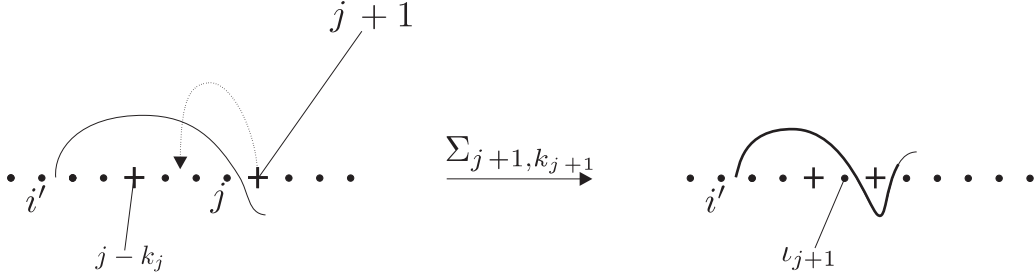}}
\caption{The action of the braid $\Sigma_{j+1,k_{j+1}}$;
as the strands represented as bold crosses
cannot cross, the point  $j+1$ is ``blocked'' in its move to the left by the position $j-k_j$.}
\label{figureclaim3}
\end{figure}

Finally, observe that for $j+2\leqslant q\leqslant n$,
the movement of the $q$th puncture yields
some bending point $\iota_q$ for the resulting curve:
this is $\iota_q=\iota_{q-1}+1$ if this
movement ends to the left of $\iota_{q-1}$
and $\iota_q=\iota_{q-1}$ otherwise.
Thus $j':= \iota_n$ is a bending point
of $\mathcal C^{s_1}$ and strands
numbered $j'$ and $j'+1$ at the end
of $s_1$ do not cross in $s_1$ by construction.
This shows that the bending point $j'$
of $\mathcal C^{s_1}$ is compatible with $s_2$. \hfill $\Box$

We can now complete the proof of
Theorem \ref{theoremartin}.
Recall that we consider a
braid $x=\Delta^px_1\ldots x_r$ in left
normal form which sends a
round curve $\mathcal C$ to another round
curve $\mathcal C'=\mathcal C^{x}$.
Since the braid~$\Delta$ corresponds to a
rotation of $D_n$,
the curve $\mathcal C^{\Delta^t}$ is round
for all integers
$t$ and we may suppose, up to multiplication
by $\Delta^{-p}$, that the left normal form
of $x$ is just $x_1\ldots x_r$.
In order to prove Theorem \ref{theoremartin},
it is sufficient to prove that the curve
$\mathcal C^{x_1}$ is round,
since once this fact is proven, we are given a
braid $x_2\ldots x_r$ in
left normal form whose action transforms
the round curve
$\mathcal C^{x_1}$
into the round curve $\mathcal C'$, and the
result follows by
induction on the number of factors in the
left normal form of $x$.
We shall give a proof by contradiction,
assuming that the curve
$\mathcal C^{x_1}$ is not round.
However it is almost round, as $x_1$ is
a simple braid, see \cite{bertjuan}.

According to Lemma \ref{lemma2},
there must exist a bending
point $j_1$ for $\mathcal C^{x_1}$.
Notice that punctures which lie above
the curve
$\mathcal C^{x_1}$ come from strands
which started to the
right of $\mathcal C$
at the beginning of $x_1$, and 
punctures which
lie below $\mathcal C^{x_1}$
come from strands which started to the
left of $\mathcal C$.
By Lemma~\ref{lemma2}, the puncture
$j_1$ lies either below the
curve~$\mathcal C^{x_1}$ or is
enclosed by it whereas the puncture $j_1+1$
lies either above, or is
enclosed by~$\mathcal C^{x_1}$.
Moreover, at most one of them is enclosed
by~$\mathcal C^{x_1}$.
This implies that the strands
in positions $j_1$ and $j_1+1$ at the end
of $x_1$ do not cross in $x_1$.
In other words, $j_1$ is a bending point
for $\mathcal C^{x_1}$ compatible
with $x_2$ (as $x_1\cdot x_2$ is in left normal form).

Now, by induction on $m=1,\ldots, r-1$
and by Lemma \ref{mainartin}, there
exist $j_2,\ldots,j_{r-1}$ such that $j_m$
bends the curve $\mathcal C^{x_1\ldots x_m}$
and is compatible with $x_{m+1}$,
for all ${m=1,\ldots, r-1}$.
The proof of Lemma \ref{mainartin} also ensures
that $\mathcal C^{x_1\ldots x_r}$ has a bending point $j_r$.

In particular we have shown that the
curve $\mathcal C^{x_1\ldots x_r}=\mathcal C^x$
is not round, provided that $\mathcal C^{x_1}$
is not round.
This is a contradiction;
therefore $\mathcal C^{x_1}$ is round
and Theorem~\ref{theoremartin} is shown.

\section{The dual case, proof of Theorem \ref{theorembkl}}\label{bklproof}
In this section we shall be interested
in the dual (or BKL) Garside structure
of the braid group \cite{bkl}. We thus adopt
the parametrization $D_n^{\ast}$ of the $n$-times
punctured disk defined in the Introduction.
We shall associate to each isotopy class of
simple closed curves $[\mathcal C]$ in $D_n^{\ast}$,
a unique reduced word $W([\mathcal C])$ to
be defined in Subsection~\ref{fromcurvestowordsdual}.
Reduced words will be a tool for describing
the action of dual positive braids on simple
closed curves in $D_n^{\ast}$ in
Subsection \ref{actionofbraidsdual}.
Finally, Theorem \ref{theorembkl}
will be proved in Subsection \ref{thelastone}.

\subsection{From curves in $D_n^{\ast}$ to reduced words}\label{fromcurvestowordsdual}
As in the previous section,
all the curves we are considering are
nondegenerate, simple and closed. We
shall speak about ``curves" when we
really mean ``nondegenerate simple closed curves".

{\textbf{Notation}} : the circle of
radius $\frac{1}{2}$ in $D_n^{\ast}$
centered at the origin will be
denoted by~$\Gamma$. Given $i,j\in\{1,\ldots,n\}$,
the move of the puncture $i$ clockwise
along $\Gamma$ up to the position~$j$
describes an arc of~$\Gamma$
which we will denote by $(i,j)$.
The arc $(i,i)$ is just the puncture $i$.

Let $\mathcal C$ be a curve in $D_n^{\ast}$,
and suppose that it has a finite number of
crossings with the circle $\Gamma$, which
are all transverse. We shall associate
to $\mathcal C$ a word $W(\mathcal C)$ that
we now define.

Choose a point $\ast$ of $\mathcal C$ which
lies on the circle $\Gamma$ and choose an
orientation for~$\mathcal C$. Running
along~$\mathcal C$ following the chosen
orientation, starting and ending at $\ast$,
determines a word in the
alphabet $Y=\{\smallsmile,\cvl,\cvr,\cll,\clr,1,\ldots,n\}$
as follows.\\
Each arc through the inner component of $D_n^{\ast}-\Gamma$
contributes a letter $\smallsmile$ to the word.
Each intersection of $\mathcal C$ with the
arc $(i,i+1)$ of $\Gamma$ contributes a letter $i$.
Finally arcs of $\mathcal C$ through
the outer component of $D_n^{\ast}-\Gamma$
contribute a letter
$\cvr$, ($\cvl$ respectively) if they
are oriented clockwise (counterclockwise, respectively);
except those as in
Figure \ref{examplecurvebkl} (a)
(thus having their endpoints in the same
arc $(i,i+1)$ of $\Gamma$) which contribute
a letter $\clr$ or $\cll$, in a a natural
way according to their orientation.

\begin{figure}[ht]
\centerline{\includegraphics{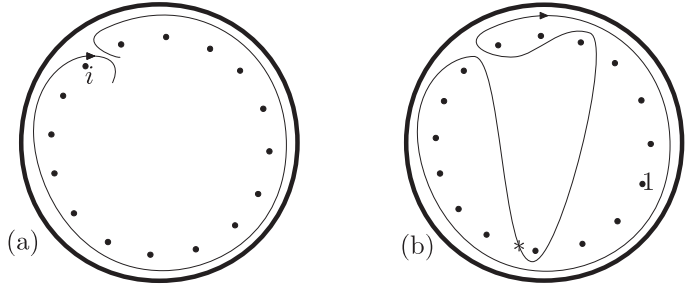}}
\caption{(a) An arc $i\clr i$.  (b) The curve of Example \ref{exacceptable}}
\label{examplecurvebkl}
\end{figure}
The number corresponding to the
intersection point $\ast$ can be chosen to be
either at the beginning or at
the end of the word.
The word obtained in this way
will be denoted by $W(\mathcal C)$
and we call it the
\emph{word associated to $\mathcal C$}.
Choosing another point~$\ast$ or
putting the letter determined by $\ast$
at the beginning or at the end of~$W(\mathcal C)$
corresponds to a cyclic permutation
of the letters in $W(\mathcal C)$,
whereas choosing the
reverse orientation of $\mathcal C$
yields the reverse of $W(\mathcal C)$,
exchanging with each other the
letters $\cvr$ and $\cvl$ ($\clr$ and $\cll$,
respectively). Hence, words associated to
curves are to be considered up to cyclic
permutation of their letters and up to
reverse, exchanging the orientation of
outer arcs.

\begin{ex}\rm
\label{exacceptable}
Let $\mathcal C$ be the curve depicted in
Figure \ref{examplecurvebkl} (b); here we have $n=16$.
The point $\ast$ and the orientation are
also indicated in the figure. This curve yields the word
$$W(\mathcal C)=
4\curvearrowleft3\smallsmile13\curvearrowleft12\smallsmile10\clr10
\smallsmile.$$
\end{ex}

Notice that two curves related by an
isotopy of $D_n^{\ast}$ fixing the
circle $\Gamma$ setwise have the same
associated word.

We say that the word associated
to $\mathcal C$ is \emph{reduced} if it
does not contain any subword
of the form $i\smallsmile i$, $i\cvr i$
or $i\cvl i$. We say that a curve $\mathcal C$
is \emph{reduced} if its associated word
$W(\mathcal C)$ is reduced.
Notice that reduced curves are exactly those
which do not bound any bigon (see \cite{fgrrw})
together with the circle $\Gamma$. According
to \cite{fgrrw}, every curve~$\mathcal C$ is
isotopic to a reduced
one~$\mathcal C^{red}$, which is unique up to
isotopy of $D_n^{\ast}$ fixing the
circle $\Gamma$ setwise. We
finally define, for each isotopy class of
curves $[\mathcal C$] in $D_n^{\ast}$, its
associated reduced word
as $W([\mathcal C]) = W(\mathcal C^{red})$.

\subsection{The action of dual positive braids}\label{actionofbraidsdual}
Let $B_n^{+\ast}$ be the submonoid of $B_n$
generated by the braids
$a_{i,j},\  1\leqslant i,j\leqslant n$, $i\neq j$,
called the monoid of dual positive braids.
We shall use a bigger generating set, namely

$$\mathcal P=\left\{ a_{i_1,i_2}a_{i_2,i_3}\ldots a_{i_{r-2},i_{r-1}}a_{i_{r-1},i_r},
  \left|
  \begin{array}{c}
  2\leqslant r\leqslant n,\\
   i_1,\ldots,i_r\  \text{all distinct and placed in this order,}\\
    \text{following the circle $\Gamma$ clockwise from $i_1$ to $i_r$}
   \end{array}
   \right.
   \right\}.$$
The elements of $\mathcal P$ are naturally called
\emph{polygons} according to their geometric
representation in~$D_n^{\ast}$; and they
correspond to a counterclockwise rotation of a
neighborhood of the convex polygon in~$D_n^{\ast}$
whose vertices are $i_1,i_2,\ldots,i_r$, following
the cyclic permutation $[i_r,i_{r-1},\ldots, i_2,i_1]$.

The following are well-known (see \cite{bkl}):
\begin{itemize}
\item Let $P_1,P_2\in \mathcal P$, seen as
polygons in $B_n^{\ast}$. If their respective
convex hulls are disjoint, then $P_1P_2=P_2P_1$.
In this situation, we will say that $P_1$
and $P_2$ are \emph{disjoint}.
\item Let $i_1,\ldots,i_r$
be $r$ ($2\leqslant r\leqslant n$)
punctures placed in this order
following $\Gamma$ clockwise from $i_1$
up to $i_r$. Then all the braid words
obtained as the concatenation of $r-1$
consecutive letters taken from the sequence
$(a_{i_1,i_2},a_{i_2,i_3},\ldots, a_{i_{r-2},i_{r-1}},a_{i_{r-1},i_r},a_{i_r,i_1})$
in this order, up to cyclic permutation,
are representatives of the same braid~$P$,
which is an element of~$\mathcal P$. Moreover,
for each pair $1\leqslant d<e\leqslant r$, the
letter $a_{i_d,i_e}$ is a prefix of~$P$.
\end{itemize}

Let us now consider the isotopy class
of a curve $\mathcal C$ and its associated
reduced word $W=W([\mathcal C])$.
Given a dual positive
braid $x$, we are going to describe which
transformations have to be
performed on the word $W$ in order to obtain the word
$W([\mathcal C]^x)$. To this purpose we first
focus on the case $x=P\in \mathcal P$.

We assume that the action of $P$ will mainly
modify the inner arcs whereas the outer arcs
are only modified by shifting their endpoints
along the circle $\Gamma$.

We first observe that any inner
arc $i\smallsmile j$ separates
the punctures into
two disjoint subsets: one
containing the punctures $i+1$ and $j$,
and the other containing the
punctures $j+1$ and $i$.
We say that $P$ is \emph{disjoint}
from the arc $i\smallsmile j$ if all
the vertices of $P$ lie in only one of
these sets. If it is so, we set
$(i\smallsmile j)^P=i\smallsmile j$.

Otherwise we say that $P$ is \emph{transverse}
to the arc $i\smallsmile j$.
In that case, let $p_{i,j}$ be the rightmost
vertex of~$P$ lying in $(j+1,i)$;
similarly let $q_{i,j}$ be the rightmost
vertex of~$P$ lying in $(i+1,j)$. By abuse
of notation, we will write $i\in P$ to mean
that $i$ is a vertex of $P$.
Observe that $p_{i,j}=i\Leftrightarrow i\in P$
and $q_{i,j}=j\Leftrightarrow j\in P$.
A priori, looking at the pictures, we would define:
$$
\begin{array}{lclr}
(i\smallsmile j)^P & = &
i\smallsmile p_{i,j}\cvl (p_{i,j}-1)\smallsmile (q_{i,j}-1)\cvr q_{i,j}\smallsmile j & \ \ \ \ \text{($F'$)}
\end{array}
$$

Formula ($F'$) is depicted in Figure \ref{figureformulbkl}.
\begin{figure}[hb]
\centerline{\includegraphics{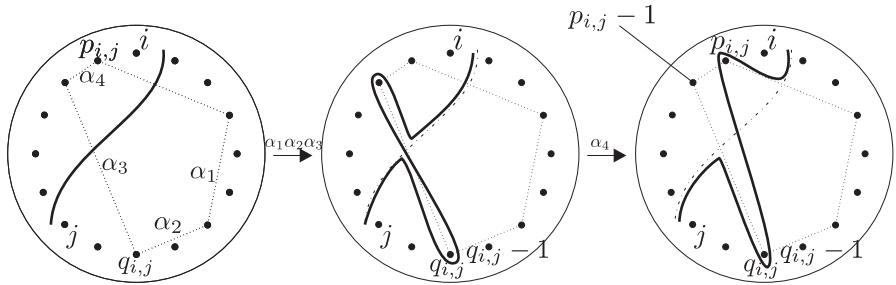}}
\caption{In this example $P$ (in dotted lines) can be expressed as the product $\alpha_1\ldots\alpha_4$; $\alpha_1\alpha_2$ acts trivially on the arc $i\smallsmile j$, $\alpha_3$ yields the arc in the middle part, and the action of $\alpha_4$ on the latter yields the arc in the right-hand side, which is $(i\smallsmile j)^P$, the image under the action of $P$ of the arc $i\smallsmile j$.}
\label{figureformulbkl}
\end{figure}

But notice that application of Formula ($F'$)
produces bigons with the circle $\Gamma$
if either $i\in P$ or $j\in P$. That is why we set:
$$
(i\smallsmile j)^P=
\left\{
\begin{array}{llc}
i\smallsmile p_{i,j}\cvl (p_{i,j}-1)\smallsmile (q_{i,j}-1)\cvr q_{i,j}\smallsmile j & \text{if $i\notin P$ and $j\notin P$}& \text{($F'0$)}\\
i\smallsmile p_{i,j}\cvl (p_{i,j}-1)\smallsmile (j-1) & \text{if $i\notin P$ and $j\in P$}& (\text{$F'1$})\\
(i-1)\smallsmile (q_{i,j}-1)\cvr q_{i,j}\smallsmile j & \text{if $i\in P$ and $j\notin P$} & (\text{$F'2$})\\
(i-1)\smallsmile (j-1) &\text{if $i\in P$ and $j\in P$} & (\text{$F'3$})
\end{array}
\right.$$

Later, we shall need the following:
\begin{remark}\rm\label{remark}
The image of an inner arc $i\smallsmile j$
under the action of a polygon $P$ lies
(up to deformation) in a neighborhood of
the union of $i\smallsmile j$ with $P$.
\end{remark}

Let us replace each
subword $(i\smallsmile j)$ in $W$ by the
corresponding subword $(i\smallsmile j)^P$
as defined above.
This transforms the outer arcs in~$W$ by shifting
their endpoints along the circle $\Gamma$.
Moreover, letters~$\cvr$ ($\cvl$, respectively)
need to be transformed into $\clr$ ($\cll$, respectively) if
they correspond in~$W$ to an arc $(c+1)\cvr c$,
where $c+1$ is shifted up to the position $c$
(or if they correspond to an arc $(c-1)\cvl c$,
where~$c$ is shifted up to the position $c-1$,
respectively). See Figure \ref{tototo}~(a).
Similarly, letters $\clr$ ($\cll$, respectively)
need to be transformed into $\cvr$ ($\cvl$,
respectively) if the endpoints of the arc do
not coincide any more after the suitable
translation. Let us denote by~$\widetilde W$
the word on~$Y$ obtained in this way.
Notice that $\widetilde W$ is not necessarily
reduced, so that $W([\mathcal C]^P)$ need not
be the same as
$\widetilde W$.
The following is the analogue of
Lemma \ref{reductionartin} in the dual setting:

\begin{lem}
\label{keylemmabkl}
Let $[\mathcal C]$ be an isotopy class of curves, and
$W=W([\mathcal C])$. Let $P\in \mathcal P$ be a polygon.
Let~$\widetilde W$ be as above, and let $W^P$ be the word obtained from $\widetilde W$ by removing all instances of subwords of the form $(p-1)\cvr (p-1)\smallsmile$ (or $(p-1)\cvl (p-1)\smallsmile$) where $p$ is a vertex of $P$ whereas $p-1$ is not.
Then we have $W^P=W([\mathcal C]^{P})$.
\end{lem}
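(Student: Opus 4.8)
The plan is to mirror the structure of the proof of Lemma~\ref{reductionartin}, replacing arcs across the real line by arcs across the circle $\Gamma$, and the single ``problematic'' position $p$ by the set of positions of the form $p-1$ where $p\in P$ but $p-1\notin P$. First I would argue that no bigon can be created ``from above'', i.e.\ that the formulae ($F'0$)--($F'3$) defining $(i\smallsmile j)^P$ never produce a subword of the form $c\smallsmile c$: one checks directly from the four cases that consecutive inner-arc endpoints in $(i\smallsmile j)^P$ are always genuinely distinct (here one uses that, by definition, $p_{i,j}$ and $q_{i,j}$ are \emph{distinct} vertices of $P$ lying in complementary arcs, and the special conventions in ($F'1$)--($F'3$) were precisely introduced to avoid the degeneracies $i\in P$ or $j\in P$). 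Hence any non-reduced subword of $\widetilde W$ must arise from an \emph{outer} arc whose two endpoints, after the shift of endpoints along $\Gamma$ and the $\cvr\leftrightarrow\clr$ relabelling, land on the same arc $(c,c+1)$ of $\Gamma$ without the corresponding arc actually closing up into a $\clr$/$\cll$.

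Next I would identify precisely when this happens. An outer arc $c\cvr (c-1)$ (say; the $\cvl$ case is symmetric) becomes a bigon exactly when the endpoint at position $c$ is shifted one step clockwise to position $c-1$ while the endpoint at $c-1$ is not shifted. Tracing through Formula ($F'$) and the endpoint-shifting rule, the endpoint originally at $c$ is moved to $c-1$ precisely when $c$ is a vertex of $P$ (so that in the image of the adjacent inner arc it is replaced by $c-1$, via the $(p_{i,j}-1)$ or $(i-1)$ appearing in the formulae), while the endpoint at $c-1$ fails to move exactly when $c-1$ is \emph{not} a vertex of $P$. Writing $p=c$, this is exactly the condition ``$p\in P$, $p-1\notin P$'' appearing in the statement, and the resulting bad subword is $(p-1)\cvr(p-1)\smallsmile$ (or $(p-1)\cvl(p-1)\smallsmile$), as claimed. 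So removing all such subwords removes every bigon.

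The remaining, and I expect main, point is to show that this removal is \emph{well-defined and sufficient}: that each occurrence of $(p-1)\cvr(p-1)\smallsmile$ in $\widetilde W$ sits inside a larger block $a\smallsmile(p-1)\cvr(p-1)\smallsmile b$ with $a\neq b$, so that deleting the middle $(p-1)\cvr(p-1)\smallsmile$ produces a legitimate reduced arc $a\smallsmile b$ rather than a further degeneracy, and that after all such deletions the word is genuinely reduced. To do this I would, as in Lemma~\ref{reductionartin}, look at the preimage: a subword $h\smallsmile(p-1)\cvr(p-1)\smallsmile l$ (or with $p-1$ replaced by $p$ in the appropriate slots) of $W$ giving rise to this block must, since $\mathcal C$ is simple, fall into finitely many cases according to the position of $h$ and $l$ relative to the arc $(p-1,p)$ and the polygon $P$; in each case I would read off from ($F'0$)--($F'3$) and Remark~\ref{remark} the values $a$ and $b$, using that $P$ is \emph{transverse} to the relevant inner arcs (otherwise they are fixed and create no bigon) and that, by Remark~\ref{remark}, the image arc stays in a neighbourhood of the original arc together with $P$, which pins down $a$ and $b$ to lie in controlled sub-arcs of $\Gamma$ on opposite sides. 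One then checks $a\neq b$ case by case. The main obstacle is the bookkeeping: there are more endpoint-shift conventions than in the classical case (both $\cvr/\clr$ and $\cvl/\cll$ relabellings, and the four sub-formulae), so keeping the case analysis honest and symmetric — and making sure no two deleted blocks overlap — is where the real work lies, but it is entirely parallel to the classical argument and no new idea is needed.
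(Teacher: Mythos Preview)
Your proposal is correct and takes essentially the same approach as the paper. Two small notes on execution: the bigon-producing outer arc is $(p-1)\cvr p$ (the short arc over the puncture $p$), with the endpoint at $p$ shifting to $p-1$ because $p\in P$ while the endpoint at $p-1$ stays because $p-1\notin P$ --- your description has the direction reversed; and the $a\neq b$ step is actually lighter than you anticipate, since instead of a multi-case split on the positions of the neighbouring endpoints the paper simply takes the single preimage subword $r\smallsmile(p-1)\cvr p\smallsmile v$ of $W$, computes $(r\smallsmile(p-1))^P$ and $(p\smallsmile v)^P$ directly from ($F'0$)--($F'3$), and reads off $a\in(v,p-1)$ and $b=q_{p,v}-1\in(p,v-1)$ using only that simplicity of $\mathcal C$ forces $r\in(v,p-2)$ --- Remark~\ref{remark} is not needed here.
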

\begin{proof}

We observe that
formulae defining $(i\smallsmile j)^P$ do not
contain any subword of the form $c\smallsmile c$
nor $c\smallfrown c$
(by $\smallfrown$ we mean
either $\cvr$ or $\cvl$). Thus the possible
instance of such a subword in $\widetilde W$
necessarily arises from the transformation
of an outer arc of $W$. According to the
formulae above, the translations of punctures
involved in such a transformation turn the
extremities $c$ and $d$ of an outer arc
into $k\in\{c,c-1\}$ and $l\in \{d,d-1\}$
respectively.

First notice that transformations of outer
arcs of the form
$c\clr c$ or $c\cll c$ cannot yield subwords
of the form $c\smallfrown c$.
Now, an arc $c\smallfrown d$ of $W$
(thus with $c\neq d$) will produce a bigon
with the circle $\Gamma$ only if the
above $k$ and $l$ are the same and $|c-d|=1$.
There are two possibilities (up to reverse),
shown in Figure \ref{tototo}.

\begin{figure}[ht]
\centerline{\includegraphics{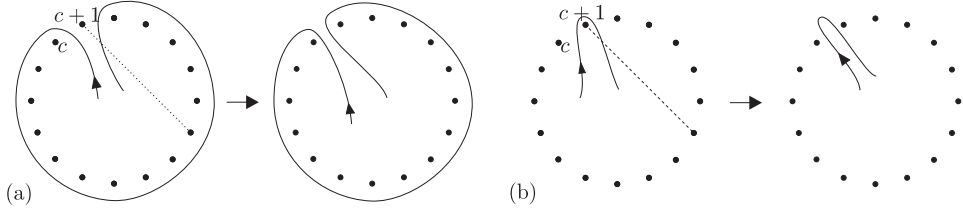}}
\caption{Transformation of an outer arc $c\smallsmile d$ of $W$, with
$|c-d|=1$, into an outer arc of $\widetilde W$ having the same extremities (in  dashed line is depicted an edge of~$P$).
(a) The transformation $c\cvl (c+1)\rightsquigarrow c\cll c$ was already mentioned: no bigon is formed.
(b) The only way (up to reverse) to get a bigon: $c\cvr (c+1) \rightsquigarrow c\cvr c$.}
\label{tototo}
\end{figure}

Finally, by the formulae
defining $(i\smallsmile j)^P$, a necessary
condition for a transformation as in
Figure~\ref{tototo}~(b) to happen is
that $c+1$ is a vertex of $P$ whereas $c$ is not.
We have shown in particular that no subword of
the form $c\smallsmile c$ can arise in $\widetilde W$,
and that the only subwords of the form $c\smallfrown c$
which possibly arise are $(p-1)\smallfrown (p-1)$,
where $p$ is a vertex of $P$ whereas $p-1$ is not.

We now claim that removing all the instances of
these subwords is sufficient in order to
turn $\widetilde W$ into a reduced word; that is,
every sequence $(p-1)\smallfrown (p-1)$
in $\widetilde W$ is a subsequence of a larger
one of the form
$${a\smallsmile (p-1)\smallfrown (p-1)\smallsmile b},$$
with $a\neq b$.
Let $r\smallsmile (p-1)\cvr p\smallsmile v$ be
a subword of $W$ to be transformed into the above
one under the action of $P$ (hence $P$ is transverse
to the arc $p\smallsmile v$). See Figure \ref{figurekeylemma}.

\begin{figure}[ht]
\centerline{\includegraphics{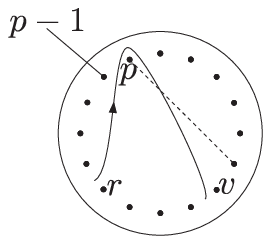}}
\caption{The arc $r\smallsmile (p-1)\cvr p\smallsmile v$ and an edge of $P$ in dashed line.}
\label{figurekeylemma}
\end{figure}

We have (since $p-1$ is not a vertex of $P$)
$$(r\smallsmile (p-1))^P=
\begin{cases}
r\smallsmile (p-1)&\text{if $P$ disjoint from $r\smallsmile (p-1)$},\\
\ldots (q_{r,p-1}-1)\cvr q_{r,p-1}\smallsmile (p-1)& \text{otherwise},
\end{cases}$$
and
$$(p\smallsmile v)^P=
\begin{cases}
 (p-1)\smallsmile (q_{p,v}-1)& \text{if $v\in P$},\\
 (p-1)\smallsmile (q_{p,v}-1)\cvr q_{p,v}\smallsmile v &
 \text{if $v\notin P$}.
\end{cases}$$
Thus it remains to be proved that
the values $a\in \{r,q_{r,p-1}\}$ and $b=q_{p,v}-1$
are distinct.
Notice that by definition, $q_{p,v}$ lies
in $(p+1,v)$ so that $b=q_{p,v}-1$ lies in $(p,v-1)$.
On the other hand, as $\mathcal C^{red}$
is simple and $W$ is its associated reduced
word, $r\in (v,p-2)$
(see Figure \ref{figurekeylemma}).
Finally by definition, $q_{r,p-1}$
lies in $(r+1,p-1)\subset (v+1,p-1)$.
Hence in any case,
$a\in(v,p-1)$;
this shows that $a\neq b$ and achieves
the proof of Lemma \ref{keylemmabkl}.
\end{proof}

We thus can associate to each isotopy class of
curves $[\mathcal C]$ and each braid $P\in \mathcal P$
the word $W([\mathcal C])^{P}$ defined thanks to
the above construction.
We are now able to define, for each
isotopy class of curves $[\mathcal C]$, the image
of its associated reduced word ${W=W([\mathcal C])}$
under the action of some dual positive braid $x$.
Indeed, if $x$ is expressed as a
product $x=\prod_{m=1}^{r} P_m$ (where each factor
lies in~$\mathcal P$), then by Lemma~\ref{keylemmabkl},
the inductive formula
$$W^x=
(W^{\prod_{m=1}^{r-1} P_m})^{P_{r}}$$
defines a word on $Y$ which is the reduced word associated to
$[\mathcal C]^x$ (hence
it does not depend on
the chosen decomposition of $x$ in terms of braids in $\mathcal P$).
This can be written
${W({[\mathcal C]}^x)=W([\mathcal C])^x}$.

\subsection{Proof of Theorem \ref{theorembkl}}\label{thelastone}
By abuse of notation, we will
speak about curves instead of isotopy classes
of curves, meaning that we are always considering
the reduced representatives. Consequently the
letter $\mathcal C$
will denote the reduced representative of the
isotopy class of the curve $\mathcal C$ and its
associated reduced word will be denoted
by $W(\mathcal C)$.\\
We shall now prove the analogue of
Theorem \ref{theoremartin} in the dual setting.
As mentioned in the introduction,
the statement of Theorem \ref{theoremartin},
with round curves defined as circles surrounding a
set of consecutive punctures is false in this
setting, as shows the following example:
\begin{ex}\label{exampleprime}\rm
Let $n=4$. Consider the braid $x=a_{1,2}.a_{1,4}$
which is in left normal form as written.
Figure \ref{exampleroundness} shows that roundness
is not preserved after each factor
of the left normal form if we define it as the
property
of being homotopic to a geometric circle surrounding
a set of consecutive punctures.
\begin{figure}[h]
\centerline{\includegraphics{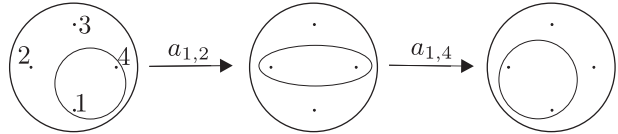}}
\caption{The curve in the middle part fails to be homotopic to a circle surrounding a set of consecutive punctures, although it is the image, under the first factor of the left normal form of $x$, of a
``round" curve which is sent by $x$ to a ``round" curve.}
\label{exampleroundness}
\end{figure}
\end{ex}
We then need to define a suitable class of curves,
which will play the role played by round curves
in the classical setting.
It is the following:
\begin{defi}\label{defistandard}
A curve $\mathcal C$ will be called \emph{standard} if $W(\mathcal C)$ only
admits letters in $\{1,\ldots,n,\cvl,\smallsmile\}$
(or in $\{1,\ldots,n,\cvr,\smallsmile\}$, up to reverse).
\end{defi}
Notice that this is equivalent to
Definition \ref{informaldefistandard},
in the Introduction. It turns out that
Theorem \ref{theoremartin}
holds in the dual setting, if we replace round
curves by \emph{standard} curves. This is the
statement of Theorem \ref{theorembkl}, which we
will now prove.

First, we recall from \cite{bkl} the following facts:
\begin{claim}\label{claim}
Every dual simple braid can be written in a
unique manner (up to permutation of the factors)
as a product of pairwise disjoint elements
of $\mathcal P$ (hence a commutative product).
\end{claim}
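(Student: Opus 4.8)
The plan is to prove Claim \ref{claim} by combining the combinatorics of the non-crossing partition lattice with the geometric interpretation of dual simple braids. Recall that dual simple braids are in bijection with non-crossing partitions of the set $\{1,\ldots,n\}$ arranged cyclically on $\Gamma$: a dual simple element $s$ corresponds to the partition whose blocks record which strands are cyclically permuted among themselves by $s$. Under this bijection, a block with vertices $i_1,\ldots,i_r$ (in clockwise cyclic order on $\Gamma$) corresponds precisely to the polygon $P\in\mathcal P$ rotating a neighborhood of their convex hull, and a singleton block contributes the trivial braid. The non-crossing condition on the partition is exactly the condition that the convex hulls of distinct blocks are pairwise disjoint, i.e.\ that the corresponding polygons are \emph{disjoint} in the sense defined above.

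First I would make this bijection explicit and recall (from \cite{bkl}) that it is indeed a bijection between dual simple braids and non-crossing partitions; this is the standard Birman--Ko--Lee picture. Next I would verify the two halves of the claim. For \emph{existence}: given a dual simple braid $s$, take its associated non-crossing partition, and for each non-singleton block take the corresponding polygon $P_B\in\mathcal P$; since the blocks are pairwise non-crossing, the polygons $\{P_B\}$ are pairwise disjoint, hence (by the first bulleted fact recalled before Lemma \ref{keylemmabkl}) they commute, and I claim their product in any order equals $s$. This last point follows because a dual simple braid is determined by its action as a permutation-with-disk-data, and the product $\prod_B P_B$ realizes exactly the cyclic permutation on each block dictated by the partition, with no strand crossing twice (disjointness of the hulls guarantees distinct polygons involve disjoint strand sets, so no extra crossings are created).

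For \emph{uniqueness}: suppose $s=P_1\cdots P_k$ with the $P_i$ pairwise disjoint. Disjointness of convex hulls means the vertex sets $V(P_1),\ldots,V(P_k)$ are the blocks (of size $\geqslant 2$) of a non-crossing partition, and the cyclic permutation induced by $s$ on each $V(P_i)$ is precisely the full cycle $[i_r,\ldots,i_1]$ carried by the polygon $P_i$. Hence the multiset $\{P_1,\ldots,P_k\}$ is forced: it consists of the polygons on the non-singleton blocks of the non-crossing partition associated to $s$. Thus any two such decompositions coincide up to reordering the factors, which is exactly the asserted uniqueness up to permutation.

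The main obstacle I anticipate is making rigorous the step ``$s$ is determined by the permutation together with the disk data, and $\prod_B P_B$ realizes it''. One must be careful that a dual simple braid is not merely a permutation: one needs to know that a positive braid all of whose pairs of strands cross at most once is determined by which pairs cross, together with the cyclic/non-crossing structure, and that the canonical such braid on a single block is precisely the polygon. I would handle this by citing the relevant structural results of \cite{bkl} on the dual Garside structure (the lattice of simple elements is isomorphic to the non-crossing partition lattice, with the polygon on a block being the simple element of that block), so that the claim reduces to the observation that, under the lattice isomorphism, an element joins the atoms below it, and the atoms (transpositions $a_{i,j}$) below a non-crossing partition group into disjoint blocks whose joins are exactly the polygons $P_B$ — all routine once the dictionary is in place.
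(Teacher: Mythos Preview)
Your proposal is correct in substance: the bijection between dual simple braids and non-crossing partitions of $\{1,\ldots,n\}$ is precisely the mechanism underlying the claim, and your existence and uniqueness arguments via the block decomposition are the standard ones.

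Note, however, that the paper does not actually give a proof of this claim. It is introduced with ``we recall from \cite{bkl} the following facts'' and is simply quoted as a known structural result of Birman--Ko--Lee. So there is no paper proof to compare against; what you have written is essentially a sketch of the argument one finds in \cite{bkl}, and it would serve perfectly well as an explanation of why the cited fact holds. Your one self-identified ``obstacle'' (that a dual simple braid is determined by its non-crossing partition, not merely its permutation) is exactly the content of the lattice isomorphism established in \cite{bkl}, so citing that result, as you propose, closes the argument.
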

Therefore, by Remark \ref{remark}, studying the
action of dual simple braids on the arc~${i\smallsmile j}$
of the curve~$\mathcal C$ through the decomposition $s=P_1\ldots P_g$
can be done quite easily, since all inner arcs
obtained by application of Formulae ($F'$) for some $P_l$
are invariant under the action of $P_k, k\neq l$.
See Figure~\ref{bigbkl}.

\begin{lem}\label{obstructing}\cite{bkl}
\begin{enumerate}
\item[1)]If $s$ is a dual simple braid and $s=P_1\ldots P_t$ is its decomposition into pairwise disjoint factors in $\mathcal P$, then
$sa_{i,j}$ is simple if and only if $P_ua_ {i,j}$ is simple for every~${u=1,\ldots,t}$.
\item[2)]If $P\in \mathcal P$, then $Pa_{i,j}$ is simple if and only if no word representative of $P$ can be written with a letter $a_{k,l}$ such that $k\in (j+1,i)$ and $l\in(i+1,j)$.
\end{enumerate}
\end{lem}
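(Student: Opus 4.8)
The plan is to derive both statements from the combinatorial description of dual simple braids recalled in Claim \ref{claim}, together with the elementary observation that a positive braid $s$ satisfies ``$sa_{i,j}$ simple'' exactly when no pair of strands that crosses in $a_{i,j}$ has already crossed in $s$; since $a_{i,j}$ crosses only the two strands ending in positions $i$ and $j$, the condition is simply that these two strands do not cross in $s$. For part 1), write $s=P_1\cdots P_t$ with the $P_u$ pairwise disjoint, so that the convex hulls of the polygons $P_u$ are disjoint subpolygons of the disk. A strand of $s$ is moved only by those factors $P_u$ whose vertex set contains its endpoint, and distinct factors act on disjoint sets of strands. Hence the strands ending at $i$ and at $j$ cross in $s$ if and only if they both belong to (the vertex set of) a single common factor $P_u$ and cross within that $P_u$; if they belong to different factors, or to no factor at all, they never cross. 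This immediately gives the equivalence: $sa_{i,j}$ is simple $\iff$ for every $u$, $P_ua_{i,j}$ is simple.

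For part 2), I would unravel what it means geometrically for $Pa_{i,j}$ to fail to be simple, where $P=P_{i_1,\dots,i_r}\in\mathcal P$ corresponds to the counterclockwise rotation of a neighborhood of the convex polygon with vertices $i_1,\dots,i_r$ (ordered clockwise along $\Gamma$). The strand of $P$ ending at a position $m$ which is a vertex of $P$ comes from the ``previous'' vertex in the clockwise cyclic order on the vertex set of $P$; strands ending at non-vertices are untouched. Thus the strands ending at $i$ and $j$ cross in $P$ precisely when $i,j$ are not both non-vertices and the chord they determine ``is used'' inside the polygon, which by the second bulleted fact recalled before Remark \ref{remark} — that $a_{i_d,i_e}$ is a prefix of $P$ for every pair of vertices — translates into: $P$ can be written with an initial letter $a_{k,l}$ whose endpoints $k,l$ separate $i$ from $j$ along $\Gamma$, i.e. (choosing the labelling so that going clockwise from $j$ one meets $i$ before returning) $k\in(j+1,i)$ and $l\in(i+1,j)$. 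I would make this precise by observing that such a letter $a_{k,l}$ exists in some word representative of $P$ iff the chord $kl$ is an edge or diagonal of the polygon of $P$ separating $i$ and $j$, iff the half-twist $a_{i,j}$ ``re-crosses'' two strands already crossed by that part of the rotation; the pictures in Figure \ref{bigbkl} make each of these equivalences transparent.

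The main obstacle, as usual in these arguments, is purely bookkeeping: being careful about the cyclic nature of the order along $\Gamma$ and about the three boundary cases in part 2) according to whether $i$, $j$, neither, or both are vertices of $P$ (mirroring Formulae ($F'0$)--($F'3$)). In particular one must check that when, say, $i$ is a vertex and $j$ is not, the strand ending at $i$ still ``originates'' on the correct side, so that the separating-chord criterion is unchanged. None of these cases requires real work, but all of them must be stated; I would dispatch them by a single figure exhausting the possibilities, exactly as is done for Lemma \ref{keylemmabkl}. Both statements being explicitly attributed to \cite{bkl}, it would also be legitimate simply to cite that reference for the proof, and I would at most include the short geometric reformulation above for the reader's convenience.
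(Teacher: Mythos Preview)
The paper does not prove this lemma at all: it is stated with the citation to \cite{bkl} and used as a black box. Your closing remark, that one could legitimately just cite \cite{bkl}, is precisely what the paper does.

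That said, the argument you sketch contains a genuine error. The ``elementary observation'' you start from --- that $sa_{i,j}$ is simple if and only if the two strands ending at positions $i$ and $j$ have not already crossed in $s$ --- is the characterisation valid for the \emph{classical} Garside structure, and it is false in the dual structure. Dual simple elements are not ``positive braids in which every pair of strands crosses at most once''; they are the divisors of $\delta$, equivalently the non-crossing partitions, and the obstruction to simplicity of $sa_{i,j}$ is a crossing of \emph{chords} in the disk, not of strands. Concretely, take $n=3$, $s=P=a_{1,2}$ and $(i,j)=(1,3)$. The strands ending at positions $1$ and $3$ after $a_{1,2}$ are those that started at $2$ and at $3$; they do not cross in $a_{1,2}$, so your criterion predicts that $a_{1,2}a_{1,3}$ is simple. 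But $\partial(a_{1,2})=a_{1,2}^{-1}\delta=a_{2,3}$, and $a_{1,3}$ does not left-divide $a_{2,3}$, hence $a_{1,2}a_{1,3}$ is \emph{not} simple --- in full agreement with part~2), since the unique letter $a_{1,2}$ of $P$ has $1\in(j{+}1,i)$ and $2\in(i{+}1,j)$.

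Since both of your reductions --- the disjointness argument for part~1) and the ``chord is used inside the polygon'' argument for part~2) --- are built on this false equivalence, the plan collapses. A correct direct proof must use the lattice description of dual simples (left-divisibility of $\delta$, non-crossing partitions, and the Kreweras complement to identify $\partial s$), which is exactly what is done in \cite{bkl}.
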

We introduce, as in the previous section,
some further notation. Given a curve $\mathcal C$
in $D_n^{\ast}$, and an unordered pair $i,j$ of
punctures, we say that $i,j$ is a \emph{bending pair}
for $\mathcal C$ if the reduced word $W(\mathcal C)$
contains a sequence of the form $\cvr i\smallsmile j\cvl$ or $\cvr i\smallsmile j\cll$
(up to reverse). Also, given a dual simple element $s$,
a bending pair $i,j$ for a curve $\mathcal C$ will
be said to be \emph{compatible} with $s$ if $a_{i,j}$
is not a prefix of $s$.

We are now able to state and prove the key lemma,
aiming to Theorem \ref{theorembkl}; it is the
analogue of Lemma \ref{mainartin} in the dual setting:
\begin{lem}
\label{proofbkl}
Let $s_1,s_2$ be two dual simple braids such
that $s_1\cdot s_2$ is in dual left normal form.
Let~$\mathcal C$ be a curve and assume that $\mathcal C$
admits some bending pair $i,j$, which is compatible
with $s_1$. Then there exist a bending pair $i',j'$
for $\mathcal C^{s_1}$ which is compatible with $s_2$.
\end{lem}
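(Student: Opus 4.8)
The plan is to mimic the proof of Lemma~\ref{mainartin} step by step, replacing the factorization of $s_1$ into braids $\Sigma_{p,k}$ by its (commutative) decomposition into pairwise disjoint polygons $s_1=P_1\cdots P_t$ provided by Claim~\ref{claim}. By the characterization of dual left normal form, it suffices to produce a bending pair $i',j'$ for $\mathcal C^{s_1}$ such that $a_{i',j'}$ is not a prefix of $s_2$; and since $s_1\cdot s_2$ is in dual left normal form, this is equivalent to showing that $a_{i',j'}$ is \emph{not} left-divisible out of $s_1$, i.e.\ that $a_{i',j'}$ is not a prefix of the right complement — so, by Lemma~\ref{obstructing}(2) applied to each $P_u$, it is enough to find $i',j'$ bending $\mathcal C^{s_1}$ such that no factor $P_u$ admits a word representative using a letter $a_{k,l}$ with $k\in(j'+1,i')$ and $l\in(i'+1,j')$.

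\textbf{The main steps.} We start from the hypothesis that $W(\mathcal C)$ contains a sequence $\cvr i\smallsmile j\cvl$ (or $\cvr i\smallsmile j\cll$), with $a_{i,j}$ not a prefix of $s_1$; by Lemma~\ref{obstructing}(2), this means that for each polygon $P_u$, no vertex of $P_u$ lies in $(j+1,i)$ \emph{and} a vertex in $(i+1,j)$ simultaneously in the ``obstructing'' configuration — equivalently, each $P_u$ is either disjoint from, or only ``one-sidedly transverse'' to, the chord $i\smallsmile j$. I would split the action $\mathcal C\mapsto\mathcal C^{s_1}$ across the polygons $P_1,\dots,P_t$ one at a time (Remark~\ref{remark} guarantees that the inner arcs created by $P_l$ stay put under $P_k$, $k\ne l$, because the hulls are disjoint). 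At each stage I track a bending pair of the current curve: initially $i_0,j_0=i,j$; when the next polygon $P_u$ acts, I use Formulae ($F'0$)--($F'3$) together with Lemma~\ref{keylemmabkl} to see how the subword $\cvr i_{u-1}\smallsmile j_{u-1}\cvl$ transforms. As in the classical case, if $P_u$ is disjoint from the arc the bending pair is unchanged (possibly shifted by the endpoint translations); if $P_u$ is transverse, Formula ($F'$) replaces $\smallsmile$ by $\smallsmile\cvl\cdots\cvr\smallsmile$, and the relevant bending pair becomes $\{p_{i,j}-1,\,q_{i,j}-1\}$ (or its one-sided variants from $F'1$, $F'2$) after the reduction of Lemma~\ref{keylemmabkl} removes any $(p-1)\cvl(p-1)\smallsmile$ bigons. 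The key bookkeeping claim, proved by induction on $u$, is that after acting by $P_1\cdots P_u$ the current bending pair $i_u,j_u$ satisfies: $i_u$ lies in $(v_u,p_u-1)$-type intervals so that no already-used polygon $P_1,\dots,P_u$ can have a word representative with an obstructing letter for $a_{i_u,j_u}$, and — crucially — the compatibility of $i,j$ with $s_1$ forces the \emph{remaining} polygons to be unable to ``widen'' the obstruction either. Then $i':=i_t$, $j':=j_t$ is a bending pair for $\mathcal C^{s_1}$ and $a_{i',j'}$ is not a prefix of $s_2$ by Lemma~\ref{obstructing}.

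\textbf{The main obstacle.} I expect the hard part to be the inductive invariant that makes ``compatible with $s_1$'' propagate to ``compatible with $s_2$'' — precisely the analogue of the ``blocking'' phenomenon of Figure~\ref{figureclaim3}, where the $(j+1)$st puncture cannot move past $j-k_j$. In the dual setting the geometry of which puncture ``blocks'' which is controlled by the cyclic order along $\Gamma$ rather than the linear order on $[0,n+1]$, and one has to argue that because the two strands of $s_1$ landing at a bending pair do not cross (equivalently $a_{i,j}$ is not a prefix of $s_1$), the polygon closest to that chord on each side cannot push a vertex across it in the obstructing direction — this is exactly what keeps the transverse case from producing a pair $i_u,j_u$ that \emph{is} obstructed by some $P_w$ with $w>u$. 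Unwinding Lemma~\ref{obstructing}(2) for each $P_u$ and reconciling it with Formulae ($F'0$)--($F'3$) and the bigon-removal of Lemma~\ref{keylemmabkl} is where the real work lies; the rest is a faithful transcription of the argument in Subsection~\ref{prooftheoremartin}.
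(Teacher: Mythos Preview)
Your plan has two concrete errors. First, you misread the hypothesis: ``$a_{i,j}$ not a prefix of $s_1$'' means only that no polygon $P_u$ has \emph{both} $i$ and $j$ among its vertices; it does \emph{not} mean each $P_u$ is ``one-sidedly transverse'' to the chord. Lemma~\ref{obstructing}(2) characterizes when $P_u a_{i,j}$ is \emph{simple}, which is strictly stronger than $a_{i,j}$ not being a prefix of $P_u$ --- a polygon can have vertices on both sides of the chord $[i,j]$ without having $i$ or $j$ as a vertex --- and such genuinely transverse polygons are precisely what must be handled. Second, your candidate bending pair $\{p_{i,j}-1,\,q_{i,j}-1\}$ is wrong: in the context $\cvr i\smallsmile j\cvl$, Formula~($F'0$) yields
\[
\cvr\, i\smallsmile p_{i,j}\,\cvl\, (p_{i,j}-1)\smallsmile (q_{i,j}-1)\,\cvr\, q_{i,j}\smallsmile j\,\cvl,
\]
so the middle piece is of type $\cvl\cdots\cvr$, not $\cvr\cdots\cvl$, and hence is \emph{not} a bending pair. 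The bending pairs actually created are $\{i,\,p_{i,j}\}$ and $\{q_{i,j},\,j\}$.

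The paper's argument is also simpler than the one you outline. Because the factors $P_1,\dots,P_g$ are pairwise disjoint and commute, no sequential induction and no ``blocking'' analysis is needed: one orders the polygons transverse to $i\smallsmile j$ as $Q_1,\dots,Q_h$ along the arc, applies Formula~($F'$) to all of them at once (Remark~\ref{remark}), and reads off $i',j'=q_1,p_2$ when $h>1$, or one of $i,p_1$ and $q_1,j$ when $h=1$ (at least one is available since not both $i,j$ are vertices of $Q_1$). That $s_1 a_{i',j'}$ is simple is then a direct check with Lemma~\ref{obstructing} using only the disjointness of the $P_u$; the analogue of the Figure~\ref{figureclaim3} obstruction you anticipate simply does not arise in the dual setting.
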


\begin{proof}
Consider the decomposition $s_1=P_1\ldots P_g$
of $s_1$ into pairwise disjoint elements of $\mathcal P$.
Notice that, as $s_1\cdot s_2$  is in left
normal form, $a_{i',j'}$ is not a prefix of $s_2$
whenever $s_1a_{i',j'}$ is simple, thus it is sufficient
to find a bending pair $i',j'$ for $\mathcal C^{s_1}$
such that $s_1a_{i',j'}$ is simple.

By hypothesis, $a_{i,j}$ is not a prefix of $s$. Thus,
none of the polygons $P_1,\ldots,P_g$ has both $i$
and $j$ as vertices.
First, the action of any polygon
 among $P_1,\ldots,P_g$ which is disjoint
 from the arc $i\smallsmile j$ in $W(\mathcal C)$
 results in a curve for which $i,j$ is still a
 bending pair (see Lemma~\ref{keylemmabkl}).
Moreover, by Lemma~\ref{obstructing}, if $P$ is
such a polygon, then $Pa_{i,j}$ is simple.
\begin{figure}[ht]
\centerline{\includegraphics{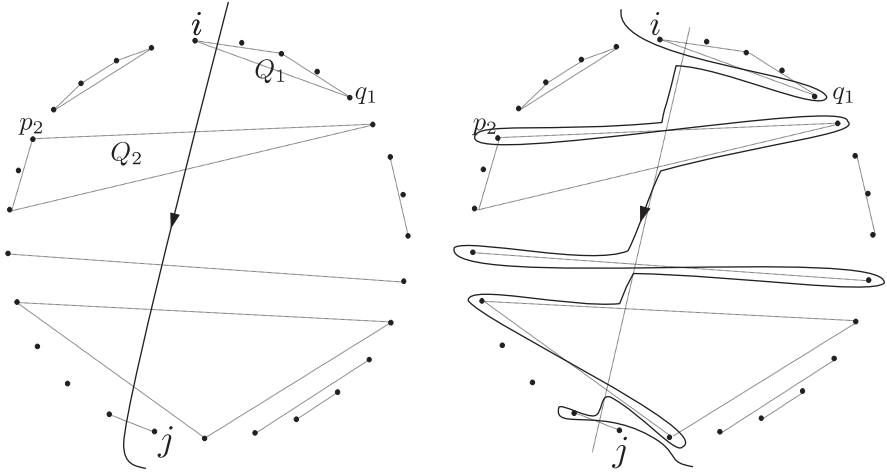}}
\caption{The braid $s_1$, as a product of pairwise disjoint polygons, is depicted in dashed lines. We can see, from left to right, the action of this braid on the arc $i\smallsmile j$.}
\label{bigbkl}
\end{figure}

Then, the action of the polygons
among $P_1,\ldots,P_g$ which are transverse
to the arc $i\smallsmile j$ can be studied as
follows (see Figure \ref{bigbkl}).
The involved polygons can be ordered by running
along the arc $i\smallsmile j$ starting
at $(i,i+1)$: $Q_1,\ldots,Q_h$.
For ${t=1,\ldots, h}$, if $p_t$ is the
rightmost vertex of $Q_t$ in $(j+1,i)$ and $q_t$
is the rightmost vertex of $Q_t$ in $(i+1,j)$, then
by Formulae ($F'$) and Lemma \ref{keylemmabkl}, the pair
$$i',j'=\begin{cases}
q_1,p_2  & \text{if $h>1$}\\
i,p_1    & \text{if $h=1$ and $i$ is not a vertex of $Q_1$}\\
q_1,j      & \text{if $h=1$ and $j$ is not a vertex of $Q_1$}
\end{cases}$$
is a bending pair for the curve $\mathcal C^{s_1}$.
Moreover, by Lemma \ref{obstructing}, in any case
the braid $s_1a_{i',j'}$ is simple. It follows
that $i',j'$ is a bending pair for the
curve $\mathcal C^{s_1}$ compatible with $s_2$.
\end{proof}

We now complete the proof
of Theorem \ref{theorembkl}.
Recall that we consider a
braid ${x=\delta^px_1\ldots x_r}$ in dual left normal
form which sends some standard curve $\mathcal C$
to another standard curve $\mathcal C'=\mathcal C^x$.
Since the braid $\delta$ corresponds to a
rotation of $D_n^{\ast}$, and thus sends
standard curves to standard curves,
up to multiplication by a power of $\delta$
we may assume that~$x$ is a dual positive braid,
whose left normal form is $x_1\ldots x_r$.
By a direct induction on the number of factors
in the left normal form of $x$, it is sufficient
to show that~$\mathcal C^{x_1}$ is standard.
We shall give a proof by contradiction,
assuming that~$\mathcal C^{x_1}$ is nonstandard.
We will see (by induction on
$m=1,\ldots,r$) that none of the curves
$\mathcal C^{x_1\ldots x_m}$
for $1\leqslant m\leqslant r$ can be standard,
contradicting the fact that $\mathcal C^x$ is standard.

Let $x_1=P_1\ldots, P_g$ be the decomposition
of $x_1$ into pairwise disjoints elements
of $\mathcal P$.
On the other hand suppose that $W(\mathcal C)$
is written only with letters in $\{\smallsmile,\cvl,1,\ldots,n\}$
(that is choose the counterclockwise
orientation for $\mathcal C$).

As $\mathcal C^{x_1}$ is nonstandard,
so must be $\mathcal C^{P_t}$ for at
least one of the~$P_t$'s (according to
Remark \ref{remark}).
We may assume that $\mathcal C^{P_1}$ is
nonstandard.
Therefore, there exist an arc $i\smallsmile j$
in $W(\mathcal C)$ such that $P_1$ is transverse
to $i\smallsmile j$ and $j$ is not a vertex of $P_1$
(hence $W(\mathcal C^{P_1})$ has at least one
letter $\cvr$, see Formulae ($F'0$) and ($F'2$)).

Consider all the polygons among $P_1,\ldots,P_g$
which are transverse to the arc $i\smallsmile j$.
By Remark~\ref{remark}, only these polygons
witness the action of $x_1$ on $i\smallsmile j$.
Running along the arc~${i\smallsmile j}$ starting
at $(i,i+1)$ allows us to order them in a natural
way (see the proof of
Lemma \ref{proofbkl}): $Q_1,\ldots,Q_h$.
Let~$q_1$ be the rightmost
vertex of $Q_1$ in $(i+1,j)$ (so that $q_1\neq j$)
and if $h>1$ let~$p_2$ be the
rightmost vertex of $Q_2$ in $(j+1,i)$.

Then we set $i_1=q_1$ and $j_1=p_2$
if $h>1$, $j_1=j$ otherwise.
Formulae ($F'$) and Lemma \ref{keylemmabkl}
now imply that $W(\mathcal C^{x_1})$ contains
the subword $\cvr i_1\smallsmile j_1\cvl$ (or $\cvr i_1\smallsmile j_1\cll$).
By construction, the braid $x_1a_{i_1,j_1}$
is simple according to Lemma~\ref{obstructing}.

In other words we saw that $i_1,j_1$ is a
bending pair for $\mathcal C^{x_1}$, and
since $x_1\cdot x_2$ is in left normal form,
this bending pair is compatible with $x_2$.
It follows by induction on $m$ and
Lemma \ref{proofbkl} that one can find,
for each $m=1,\ldots, r-1$, a bending
pair $i_m,j_m$ for the
curve $\mathcal C^{x_1\ldots x_m}$
which is compatible with $x_{m+1}$.
The existence of a bending pair $i_r,j_r$ for
the curve $\mathcal C^{x_1\ldots x_r}$ also
follows from the proof of Lemma \ref{proofbkl}.

We proved in particular that the
curve $\mathcal C^{x_1\ldots x_r}=\mathcal C^x$
is not standard. This is a contradiction
which completes the proof of Theorem \ref{theorembkl}.


\section{Deciding the dynamical type of braids}
The above results give rise to an algorithm
for deciding the Nielsen-Thurston type of a
given braid, in the spirit of \cite{leelee},
\cite{bertjuan}, using the dual structure.
Thurston's classification Theorem~\cite{thurston}
asserts that the elements of Mapping Class
Groups of surfaces (and therefore, in particular, braids)
split into three mutually exclusive types, according to their
dynamical properties: periodic, reducible non-periodic and
pseudo-Anosov. The reader is referred to \cite{thurston}, \cite{fathi}
for details and precise definitions. We restrict ourselves to recalling
that reducible braids are those preserving a family of
pairwise disjoint isotopy classes of nondegenerate
simple closed curves in the $n$-times
punctured disk.
Periodic braids being easy to detect~\cite{bggmperiodic},
the main problem to be solved is to
decide whether a non-periodic braid is pseudo-Anosov
or reducible.
In what follows, we will assume that the braids under
consideration are not periodic and the curves, simple,
closed and nondegenerate, will be considered up to
isotopy so that the term \emph{curve} will be
applied to the isotopy class of a curve.

From Theorem \ref{theorembkl}, it follows that if a braid
preserves a family of standard curves, then its cyclic sliding
\cite{juanvolkercyclic} also preserves a family of standard curves
(see Proposition 4.2. and Corollary 4.3.
in~\cite{juan2010}). Therefore:

\begin{prop}
Let $x$ be a reducible braid. Then there exist
some $y$ in $SC_{BKL}(x)$
(the set of sliding circuits of $x$ \cite{juanvolkercyclic}, with respect
to the dual structure)
which preserves a family of (pairwise disjoint)
standard curves.
\end{prop}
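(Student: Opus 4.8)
The plan is to deduce the Proposition directly from Theorem \ref{theorembkl} together with standard properties of cyclic sliding, following the template of \cite{juan2010} which handled the analogous statement for the classical Garside structure. First I would recall the relevant algebraic facts: for a braid $x$ in the dual Garside structure, the set $SC_{BKL}(x)$ of sliding circuits is a nonempty, finite subset of the conjugacy class of $x$, it consists of conjugates of $x$ obtained by iterating cyclic sliding until a repetition occurs (see \cite{juanvolkercyclic}), and every element of $SC_{BKL}(x)$ has the same left normal form length structure; in particular, conjugating $x$ by a suitable element carries $x$ into $SC_{BKL}(x)$. The key dynamical ingredient, which I would import from \cite{juan2010} (Proposition 4.2 and Corollary 4.3), is that if a braid $z$ preserves a family of standard curves then so does its cyclic sliding $\mathfrak{s}(z)$; the proof in \cite{juan2010} used only Theorem \ref{theoremartin}, so it transfers verbatim once Theorem \ref{theorembkl} is available, because cyclic sliding is a conjugation by a product of an initial factor and a (suitable) factor of the left normal form, and Theorem \ref{theorembkl} precisely controls how standardness propagates along the factors of the dual left normal form.

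The main steps, in order, are as follows. Since $x$ is reducible, there is a conjugate $x_0 = \beta^{-1} x \beta$ of $x$ that preserves a family of pairwise disjoint nondegenerate simple closed curves. I would first argue that $x_0$ can in fact be chosen to preserve a family of \emph{standard} curves (in the dual sense of Definition \ref{defistandard}): every nondegenerate simple closed curve is sent to a standard one by some braid, and acting by that braid on the whole invariant family produces a conjugate of $x$ preserving a family of standard curves. Then, starting from such a conjugate and repeatedly applying cyclic sliding, the invariance of ``preserving a family of standard curves'' under $\mathfrak{s}$ (the transferred Corollary 4.3 of \cite{juan2010}) shows that every iterated cyclic sliding of that conjugate still preserves a family of standard curves. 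Because $SC_{BKL}$ of any element is reached after finitely many cyclic slidings, some $y \in SC_{BKL}(x)$ is obtained in this way, and $y$ preserves a family of standard curves; since $y$ and the original $x$ are conjugate and $SC_{BKL}$ is a conjugacy invariant set, this $y$ lies in $SC_{BKL}(x)$ as required. Finally I would remark that a family of standard curves can always be chosen pairwise disjoint if it is $y$-invariant and nondegenerate, which gives the parenthetical claim.

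The step I expect to be the main obstacle is verifying that the argument of \cite{juan2010} (Proposition 4.2 / Corollary 4.3) really does go through with the dual Garside structure in place of the classical one, i.e.\ that the only property of the classical structure it used was Theorem \ref{theoremartin} and not some feature special to round curves or to the Artin parametrization. Concretely, one must check that cyclic sliding in the dual structure is a conjugation whose conjugating element is built from the first factor and a prescribed simple factor of the dual left normal form, so that Theorem \ref{theorembkl} applies to each intermediate curve image; and one must check that the notion of ``standard curve'' is preserved under the rotation $\delta$ (which it is, as already noted in the proof of Theorem \ref{theorembkl}, since $\delta$ acts as a rotation of $D_n^\ast$). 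Once these compatibility points are confirmed — which is routine but does require writing out the definition of cyclic sliding and matching it against the hypotheses of Theorem \ref{theorembkl} — the Proposition follows immediately by the finiteness of the sliding circuit construction.
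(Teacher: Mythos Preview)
Your proposal is correct and follows essentially the same route as the paper: the paper's entire argument is the single sentence preceding the Proposition, namely that Theorem~\ref{theorembkl} allows the argument of \cite{juan2010} (Proposition~4.2 and Corollary~4.3) to be transferred to the dual setting, so that cyclic sliding preserves the property of fixing a family of standard curves, whence iterating cyclic sliding from a suitable conjugate lands in $SC_{BKL}(x)$ while retaining a standard reduction system. Your write-up simply spells out the steps (passing to a conjugate with standard invariant curves, invariance under $\mathfrak{s}$, and conjugacy-invariance of $SC_{BKL}$) that the paper leaves implicit.
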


The main result of this section asserts that
this last condition, i.e. preserving a (non-empty) family
of pairwise disjoint standard curves, is checkable in polynomial
time. In fact, we shall prove:

\begin{thm}\label{Thmalgo}
There is an algorithm which decides whether a given
$n$-strand braid~$x$ in dual left normal form
$x=\delta^p x_1\ldots x_{\ell}$
admits a standard invariant curve.
Moreover this algorithm takes time $O(\ell\cdot n^4)$.
\end{thm}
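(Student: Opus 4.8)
The plan is to convert the geometric question into a purely combinatorial one about subsets of punctures, and then to answer the combinatorial question by a single forward sweep along the normal form.

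First I would set up the dictionary between standard curves and subsets. A reduced standard curve is a radial graph $\rho=\rho(\theta)$, so it crosses $\Gamma$ at most once in each arc $(i,i+1)$; hence its isotopy class is completely determined by the set $S\subseteq\{1,\dots,n\}$ of punctures it encloses, and conversely every $S$ with $2\leqslant|S|\leqslant n-1$ is realized by a unique standard curve $\mathcal C_S$ (two radial graphs enclosing the same punctures are isotopic in $D_n^{\ast}$, since the straight-line homotopy between them never sweeps across a puncture). Writing $\pi$ for the permutation of the punctures induced by $x$, the image $\mathcal C_S^{\,x}$ always encloses $\pi(S)$; therefore, whenever $\mathcal C_S^{\,x}$ happens to be standard, it must coincide with $\mathcal C_{\pi(S)}$. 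Combining this with Theorem \ref{theorembkl} gives the key reformulation: \emph{$x$ admits a standard invariant curve if and only if there is a subset $S$ with $2\leqslant|S|\leqslant n-1$, $\pi(S)=S$, and $\mathcal C_S^{\,\delta^p x_1\cdots x_m}$ standard for every $m$}, the last requirement being equivalent, by Theorem \ref{theorembkl}, to the single requirement that $\mathcal C_S^{\,x}$ be standard.

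Next I would describe how to test this standardness condition without ever drawing a curve. Since $\delta$ is a rotation it sends standard curves to standard curves, so the prefix $\delta^p$ never destroys standardness and only contributes a rotation to $\pi$; the only possible sources of non-standardness are the simple factors $x_m$. Fixing the counterclockwise orientation, the current curve is recorded by its inner arcs $i\smallsmile j$ together with its enclosed set $T=\pi_{<m}(S)$. Writing $x_m=P_1\cdots P_g$ as a product of pairwise disjoint polygons (Claim \ref{claim}), Remark \ref{remark}, Formula ($F'$) and Lemma \ref{keylemmabkl} let me compute the effect of $x_m$ arc by arc; as isolated in the proof of Theorem \ref{theorembkl}, the curve $\mathcal C_T$ fails to stay standard under $x_m$ exactly when some $P_t$ is transverse to an inner arc $i\smallsmile j$ of $\mathcal C_T$ with $j\notin P_t$, and by Lemma \ref{obstructing} this depends only on how the cyclic runs of $T$ sit relative to the vertices of $P_t$. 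Each such occurrence is thus a \emph{local constraint} on $T$, equivalently on $S$ via the known permutation $\pi_{<m}$: it forbids $S$ from cutting a particular run of punctures in a particular way.

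The algorithm then accumulates these constraints while sweeping once through $x_1,\dots,x_\ell$, and finally decides whether some $\pi$-invariant $S$ of admissible size survives all of them. Concretely I would maintain, through the $\ell$ factors, the transformation of the $O(n^2)$ possible inner arcs under the successive $x_m$ (using ($F'$) and the transverse/disjoint test of Lemma \ref{obstructing}), each factor being processed in $O(n^4)$ time, which yields the advertised $O(\ell\cdot n^4)$ running time; the bending detections read off from these updates produce the list of forbidden separations, and a $\pi$-invariant $S$ (a union of cycles of $\pi$ with $2\leqslant|S|\leqslant n-1$) compatible with all of them is sought among the cyclic blocks determined by $\pi$.

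The main obstacle is this final decision step and its justification. I expect the real work to be proving that the accumulated local no-bending constraints are not merely necessary but \emph{sufficient} for global standardness of the whole orbit — precisely where Theorem \ref{theorembkl} and the exact form of Formula ($F'$) are needed to rule out hidden interactions between distant factors — and in organizing the search for a compatible $\pi$-invariant $S$ so that it never degenerates into an exponential enumeration over the possibly exponentially many unions of cycles of $\pi$, while still fitting the whole computation into the $O(\ell\cdot n^4)$ budget. The cardinality bookkeeping guaranteeing nondegeneracy ($2\leqslant|S|\leqslant n-1$) is routine by comparison.
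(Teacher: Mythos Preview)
Your setup is correct and matches the paper: standard curves correspond bijectively to proper subsets $S$ of punctures with $|S|\geqslant 2$, and by Theorem~\ref{theorembkl} the question reduces to finding such an $S$ with $\pi(S)=S$ whose associated curve stays standard after each prefix. The place where your proposal stalls is exactly the place you yourself flag: accumulating ``forbidden separations'' and then searching for a compatible $\pi$-invariant $S$ is a constraint-satisfaction problem over the (potentially exponentially many) unions of cycles of $\pi$, and you give no mechanism to solve it within the stated time bound. This is not a cosmetic gap; it is the entire algorithmic content of the theorem.

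The paper bypasses this search entirely by a different idea: a \emph{monotone closure operator}. Given a subset $I$ and a simple factor $s$, it defines a superset $I'\supseteq I$ with the property that every standard curve enclosing $I$ whose image under $s$ is standard must also enclose $I'$; moreover, if $I'=I$ then the curve enclosing exactly $I$ is already sent to a standard curve by $s$. Composing through the normal form gives an operator $I\mapsto S(I,x)$ computable in $O(\ell\cdot n)$. Now, instead of searching over subsets, one \emph{grows} them: start from $I_0=\{a,b\}$ for a fixed pair of punctures, iterate $I_m=I_{m-1}\cup S(I_{m-1},x)$; this chain stabilizes in at most $n-2$ steps, and if it stabilizes at a proper subset that subset is automatically $x$-invariant (and is in fact the smallest invariant standard curve through $a,b$). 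Running this for all $O(n^2)$ pairs $\{a,b\}$ gives $O(\ell\cdot n^4)$. The point is that monotonicity plus the ``if $I'=I$ then already standard'' clause turns an a priori global search into a local fixed-point computation; this is the lemma you are missing, and without it the complexity bound cannot be established.
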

\begin{proof}
Given a subset $I_0$ of $\{1,\ldots,n\}$ (of cardinality
$2\leqslant \#(I_0)\leqslant n-1$) and a braid $x$
in dual left normal form $x=\delta^px_1\ldots x_{\ell}$,
the main task of the algorithm is to construct a bigger
set $S(I_0,x)$ of punctures, which must be
enclosed by the image under $x$ of any standard curve
$\mathcal C$ surrounding
punctures in $I_0$, provided $\mathcal C^x$ is standard.
This is achieved by the following, which is the
key result:

\begin{lem}\label{Lemalgo}
Suppose we are given a dual simple braid $s$,
decomposed into pairwise disjoint
polygons $s=P_1\ldots P_g$, together
with a proper subset $I$ of $\{1,\ldots,n\}$ of cardinality at least
2, whose elements
are enumerated $p_1,\ldots,p_k$ in this order (up to
cyclic permutation)
running along the circle $\Gamma$ clockwise (we will put $p_{k+1}=p_1$).
For each $i=1,\ldots,k$, consider all the polygons
among $P_1,\ldots, P_g$
having at least one vertex, but not all,
in $(p_{i}+1,p_{i+1}-1)$ and which does not have $p_i$ as a vertex,
and take for each of them
its leftmost vertex in the arc $(p_i+1,p_{i+1}-1)$.
Let $I'$ be the union of~$I$ with all punctures collected
in the above way.
If $\mathcal C$ is a standard curve surrounding the punctures
in $I$ (and possibly other punctures) such that~$\mathcal C^s$ is standard,
then $\mathcal C$ surrounds the punctures
in $I'$. Moreover, if $I'=I$, the standard curve whose set
 of inner punctures is exactly
 $I$ is sent to a standard curve by $s$.
\end{lem}
\begin{figure}[ht]
\centerline{\includegraphics{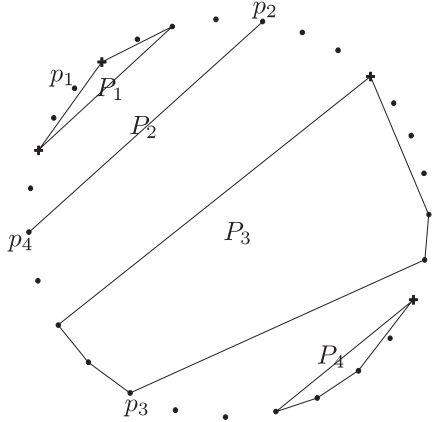}}
\caption{An example that illustrates Lemma \ref{Lemalgo}: here the simple braid $s$ is decomposed as
$s=P_1\ldots P_4$, and $I$ is made of 4 points $p_1,p_2,p_3,p_4$. Then running along each arc $(p_i,p_{i+1})$
allows the construction of $I'$, which consists of adding punctures depicted as crosses to $I$.}
\label{F:algorithm}
\end{figure}

\begin{proof}
First observe that the image of a standard curve under
$s$ is standard if and only if the image of this curve
under each of the $P_i$'s is standard (Remark \ref{remark}).
Now, for a polygon $Q$ and a standard curve $\mathcal C$
oriented counterclockwise, according to Formulae ($F'$),
the following are equivalent:
\begin{itemize}
\item $\mathcal C^Q$ is standard.
\item for each inner arc $a\smallsmile b$ of $\mathcal C$ which is transverse to $Q$, the puncture $b$ is a vertex of $Q$.
\end{itemize}
Let $\mathcal C$ be a standard curve (oriented counterclockwise)
surrounding the punctures in $I$,
such that~$\mathcal C^s$ is standard. The punctures in $I'-I$ (that is the punctures
added to $I$ by the process of Lemma~\ref{Lemalgo})
cannot lie in the outer component of $D_n^{\ast}-\mathcal C$. Indeed, the belonging of such
a puncture to the outer component of $D_n^{\ast}-\mathcal C$ would yield
some inner arc $a\smallsmile b$ of $\mathcal C$, transverse to
a polygon in $s$ not having $b$ as a vertex; this would be in contradiction with the above
remarks. See Figure \ref{F:algorithm}.

Now let $\mathcal C_{I}$ be the standard curve surrounding exactly the
punctures in $I$ (oriented counterclockwise) and suppose that the process of
Lemma \ref{Lemalgo} yields $I'=I$. Then, whenever a
polygon in~$s$ and an inner arc $a\smallsmile b$ of $\mathcal C_{I}$
are transverse, the puncture $b$ is a vertex of the involved polygon
(otherwise some puncture in $(b+1,a)$ would be added to
$I$ by the process of Lemma \ref{Lemalgo}).
By the remark above, it follows that $\mathcal C_{I}^s$ is standard, as claimed.
\end{proof}
The set $I'$ of the above lemma depends only on $I$ and $s$,
and we will
denote by $S(I,s)$ the set $\rho(s)(I')$ (where $\rho$ is the
natural morphism $B_n\longrightarrow S_n$).

Lemma \ref{Lemalgo} says that if $\mathcal C$ is a standard curve
surrounding the punctures in $I$ and $\mathcal C^s$ is standard,
then $\mathcal C^s$ must surround the punctures in $S(I,s)$.
Using Theorem \ref{theorembkl} and an induction on the number of non-$\delta$
factors in the left normal form of $x$ allows to construct the set $S(I_0,x)$.

First, we set $S(I_0,\delta^p x_1)=S(\rho(\delta^p)(I_0),x_1)$.
Then
we define, for $i=1,\ldots,\ell-1$,
$$S(I_0,\delta^px_1\ldots x_{i+1})=S(S(I_0,\delta^px_1\ldots x_{i}),x_{i+1}).$$
Notice that the set $S(I_0,x)$ can be computed in time $O(\ell\cdot n)$. Notice also
that, in virtue of Lemma~\ref{Lemalgo}, the equality $S(I,x)=I$ implies that the curve
whose set of  inner punctures is exactly~$I$ is $x$-invariant.

The last step in the proof of Theorem \ref{Thmalgo}
is the following:
\begin{prop}
Let $a,b$ be any pair of punctures in $\{1,\ldots,n\}$.
There is an algorithm which decides whether a given $n$-braid
$x$ of length $\ell$ admits a standard invariant
reduction curve surrounding the punctures $a$ and $b$. Moreover this algorithm runs in time $O(\ell\cdot n^2)$.
\end{prop}
\begin{proof}
The algorithm does the following:
\begin{itemize}
\item set $I_0=\{a,b\}$,
\item for $m=1,\ldots, n-2$, compute the set
$I_m=S(I_{m-1},x)\cup I_{m-1}$.
\end{itemize}
Remark that $I_{i-1}\subset I_{i}$ for all $i$.
If $I_{n-2}=\{1,\ldots,n\}$, then the algorithm answers negatively;
otherwise, the standard curve which surrounds exactly the punctures
in $I_{n-2}$ is $x$-invariant. Indeed, in the latter case,
there must exist some $k$, ${1\leqslant k<n-2}$, such that
$\#(I_k)=\#(I_{k+1})$, and therefore
$I_k=I_{k+1}=I_{n-2}$. This means that
$S(I_k,x)=I_k$ and therefore
the standard curve whose set of inner punctures
is exactly $I_{n-2}$ (and thus contains $a$ and $b$)
is $x$-invariant. The complexity of the algorithm is
$O(\ell\cdot n^2)$, according to the above estimation about the
computation of $S(I_0,x)$.
\end{proof}
Iterating the above algorithm for each pair of points in
$\{1,\ldots,n\}$ yields the algorithm in the statement of
Theorem \ref{Thmalgo}, since the number of pairs of points
in $\{1,\ldots,n\}$ is $\frac{n\cdot (n-1)}{2}$, so that
the complexity of the whole algorithm is $O(l\cdot n^4)$ as claimed.
\end{proof}
We observe that this algorithm is the analogue of Theorem
2.9. in \cite{bertjuan}. The result in \cite{bertjuan}
deals with the classical structure,
but needs an additional hypothesis about the triviality
of the ``inner" braid; we believe that obtaining the same conclusion
without this hypothesis could be an indication that the dual
structure is more adapted for the kind of problems we are dealing with.

The algorithm deciding the Nielsen-Thurston type of a given braid $x$
is as follows:
\begin{itemize}
\item[1.]Test whether $x$ is periodic \cite{bggmperiodic}, and if it is so, return
``periodic", and stop.
\item [2.]Compute the set of sliding circuits (for the dual structure)
of $x$ \cite{gebhardtjuanalgo}.
\item [3.]For each element $y$ of $SC_{BKL}(x)$,
for each $k=1,\ldots,\frac{n}{2}$, apply the algorithm of
Theorem~\ref{Thmalgo} to the braid $y^k$ (a curve belonging to a family
fixed by $y$ must indeed be fixed
by some power $y^k$, with $k=1,\ldots,\frac{n}{2}$).
\item[4.]Stop whenever a positive answer is found, and return
``reducible"; otherwise return ``pseudo-Anosov".
\end{itemize}
The complexity of this algorithm is not bounded above by a polynomial in
$n$ and $\ell$ since the size of the set of sliding circuits
is known to be exponential in general
\cite{juanvolkercyclic}, \cite{prasolov}.

\end{document}